\theoremstyle{plain}
  \newtheorem{theorem}{Theorem} 
	\newtheorem{proposition}[theorem]{Proposition}
\theoremstyle{definition}
  \newtheorem{definition}[theorem]{Definition}
  \newtheorem{remark}[theorem]{Remark}
  \pgfplotsset{compat=1.11}
\DeclareMathOperator*{\argmin}{arg\,min}
\renewcommand{\vec}[1]{\underline{#1}}
\NewDocumentCommand{\mat}{mo}{%
  \IfValueTF{#2}{%
    \underline{\underline{#1}}{#2}
  }{%
    \underline{\underline{#1}}\,
  }%
}
\newcommand{\sign}{\operatorname{sign}}
\newcommand{\scp}[2]{\left\langle{#1,\, #2}\right\rangle}
\renewcommand{\d}{\mathrm{d}} 
\newcommand{\intd}{\, \mathrm{d}}
\renewcommand{\epsilon}{\varepsilon}
\renewcommand{\phi}{\varphi}
\renewcommand{\rho}{\varrho}
\newcommand{\N}{\mathbb{N}}
\newcommand{\R}{\mathbb{R}}
\newsavebox{\DelimiterBox}
\newlength{\DelimiterHeight}
\newlength{\DelimiterDepth}
\newsavebox{\ArgumentBox}
\newlength{\ArgumentHeight}
\newlength{\ArgumentDepth}
\newlength{\ResizedDelimiterHeight}
\newlength{\ResizedDelimiterDepth}
\def\ps@pprintTitle{%
 \let\@oddhead\@empty
 \let\@evenhead\@empty
 \def\@oddfoot{}%
 \let\@evenfoot\@oddfoot}
\begin{document}

\begin{frontmatter}

\title{Stable High Order Quadrature Rules for Scattered Data and General Weight Functions}

\author[label1,label2]{Jan Glaubitz\corref{cor1}}
\ead{Jan.Glaubitz@Dartmouth.edu}

\cortext[cor1]{Corresponding author: Jan Glaubitz}

\address[label1]{Max Planck Institute for Mathematics, Vivatsgasse 7, 53111 Bonn, Germany.}
\address[label2]{Department of Mathematics, Dartmouth College, 27 North Main Street, Hanover, NH 03755, USA.}

\begin{abstract}
  Numerical integration is encountered in all fields of numerical analysis and the engineering sciences. 
By now, various efficient and accurate quadrature rules are known; for instance, Gauss-type quadrature rules. 
In many applications, however, it might be impractical---if not even impossible---to obtain data to fit known 
quadrature rules. 
Often, experimental measurements are performed at equidistant or even scattered points in space or time. 
In this work, we propose stable high order quadrature rules for experimental data, which can accurately handle general 
weight functions. 

\end{abstract}

\begin{keyword}
  Numerical integration 
  \sep
  stable high order quadrature rules 
  \sep
  general weight functions 
  \sep 
  scattered data 
  \sep 
  (nonnegative) least squares    
  \sep
  discrete orthogonal polynomials
\end{keyword}

\end{frontmatter}

\section{Introduction} 
\label{sec:introduction} 

An omnipresent problem in mathematics and the applied sciences is the computation of integrals of the form 
\begin{equation}\label{eq:I}
  I[f] := \int_a^b f(x) \omega(x) \intd x 
\end{equation}
with \textit{weight function} ${\omega:[a,b] \to \R}$. 
In many situations, we are faced with the problem to recover $I[f]$ from a finite set of measurements 
$\{f(x_n)\}_{n=1}^N \subset \R$ at distinct points ${\{x_n\}_{n=1}^N \subset [a,b]}$. 
This problem is referred to as \textit{numerical integration}. 
A universal approach is to make use of the data $\{(x_n,f(x_n))\}_{n=1}^N$ and to approximate the integral \eqref{eq:I} 
by a finite sum 
\begin{equation}
  Q_N[f] := \sum_{n=1}^N \omega_n f(x_n)
\end{equation}
which is called an \textit{$N$-point quadrature rule (QR)}. 
The points $\{x_n\}_{n=1}^N$ are referred to as the \textit{quadrature points}, and the $\{\omega_n\}_{n=1}^N$ are 
referred to as the \textit{quadrature weights}. 
A QR is uniquely determined by its quadrature points and quadrature weights. 

\subsection{State of the art}

In the case of nonnegative weight functions ${\omega}$, many QRs with favorable 
properties are known. 
We refer to a rich body of literature 
\cite{gautschi1997numerical,krylov2006approximate,davis2007methods,brass2011quadrature}. 
W.\,r.\,t.\ their quadrature points, we can loosely differentiate between two classes of QRs: 

\begin{enumerate}
  \item QRs for which the quadrature points can be chosen freely. 
  
  \item QRs for which the quadrature points are prescribed, for instance, by a set of equidistant or 
scattered points.
\end{enumerate}

In the first class of QRs, the quadrature points and weights are typically determined such that the degree 
of exactness is maximized. 
We say an $N$-point QR $Q_N$ has \textit{(algebraic) degree of exactness $d$} when 
\begin{equation}\label{eq:exactness-cond}
    Q_N[p] = I[p] \quad \forall p \in \mathbb{P}_d([a,b])
\end{equation} 
is satisfied, where $\mathbb{P}_d([a,b])$ is the linear space of all (algebraic) polynomials on $[a,b]$ of degree at 
most $d$. 
Examples of such QRs are Gauss and Gauss--Lobatto-type QRs 
\cite[Chapter 25.4]{abramowitz1964handbook}. 
Further, we mention Fej\'er \cite{fejer1933infinite} and Clenshaw--Curtis \cite{clenshaw1960method} QRs. 
See the work of Trefethen \cite{trefethen2008gauss} for an interesting comparison between Clenshaw--Curtis and 
Gauss-type QRs. 

Yet, in many applications, it may be impractical---if not even impossible---to obtain data to fit 
these QRs. 
Typically, experimental measurements are performed at equidistant or scattered points. 
In this situation, we are in need of QRs of the second class, where the quadrature points are 
prescribed and we are faced with the task to find suitable quadrature weights. 
Composite Newton--Cotes rules for equidistant points and the composite midpoint as well as the composite trapezoidal 
QR for scattered points are first simple examples, yet they are of limited degree of exactness. 
It is well known that Newton--Cotes rules for equidistant points start to become unstable for  
degrees of exactness higher than $d=10$. 
This instability might even intensify for more general interpolatory QRs for 
scattered quadrature points. 
Stable QRs on equidistant and scattered points with high degrees of exactness have been proposed by Wilson 
\cite{wilson1970discrete,wilson1970necessary} 
and revisited in \cite{huybrechs2009stable} and \cite[Chapter 4]{glaubitz2020shock}. 
Moreover, they have been applied to construct stable high order (discontinuous Galerkin) methods for partial 
differential equations in \cite{glaubitz2020stable}.

All the above QRs only apply to integrals of the form \eqref{eq:I} with nonnegative weight functions. 
Yet, in many situations we have to deal with weight functions with mixed signs. 
For instance, in the weak form of the Schr\"odinger equation, integrals of the form 
\begin{equation}
  I[f] = \int_a^b f(x) V(x) \intd x
\end{equation} 
arise, where $V$ is a potential with possibly mixed signs. 
Another field of application is (highly) oscillatory integrals of the form 
\begin{equation}
  I[f] = \int_a^b f(x) e^{i \omega g(x)} \intd x, 
\end{equation}
where $g$ is a smooth oscillator and $\omega$ is a frequency parameter ranging from $0$ to $\infty$. 
See \cite{iserles2004quadrature,iserles2006highly,huybrechs2009highly} and references therein.

\subsection{Our contribution} 

In this work, we investigate stability concepts for QRs approximating integrals \eqref{eq:I} with 
general weight functions $\omega$. 
For nonnegative weight functions stability of (a sequence of) QRs essentially follows from QRs having 
non\-negative-only 
quadrature weights. 
For general weight functions, however, this connection breaks down, and we have to consider stable and 
sign-consistent (the quadrature weights have the same sign as the weight function at the quadrature points) QRs as 
separated classes. 
Following these observations, we propose two different procedures to construct stable QRs as well as sign-consistent 
QRs with high degrees of exactness 
on equidistant and scattered quadrature points for general weight functions. 
The procedure to construct stable (sequences of) QRs with high degrees 
of exactness builds upon a least squares (LS) formulation of the exactness conditions \eqref{eq:exactness-cond}.
The procedure to construct sign-consistent QRs with high degrees of exactness builds upon a nonnegative LS (NNLS) formulation of the exactness conditions \eqref{eq:exactness-cond}. 
Both procedures have already been investigated by Huybrechs \cite{huybrechs2009stable} for positive weight functions. 
Here, we modify the procedures to handle general weight functions and present a discussion of their stability. 

\subsection{Outline}

The rest of this work is organized as follows.
After this introduction, \S \ref{sec:stability} revisits the concept of stability for QRs. 
In particular, we introduce the concept of sign-consistency. 
In \S \ref{sec:LS-QRs}, we construct stable (sequences of) QRs with high degrees 
of exactness by a LS formulation of the exactness conditions \eqref{eq:exactness-cond}. 
Sign-consistent QRs with high degrees of exactness, on the other hand, are constructed in \S 
\ref{sec:NNLS-QRs} by an NNLS formulation of the exactness conditions \eqref{eq:exactness-cond}. 
Finally, stable and sign-consistent (sequences of) QRs are compared by numerical tests in \S 
\ref{sec:tests}. 
We end this work with some concluding thoughts in \S \ref{sec:summary}. 

\section{Stability of QRs} 
\label{sec:stability} 

Besides exactness \eqref{eq:exactness-cond}, stability is a crucial property of QRs as well as of any 
other mathematical problem. 
Speaking heuristically, stability ensures that small changes in the input argument only yield small changes in the 
output value. 
See \cite[Lecture 14]{trefethen1997numerical} for a more detailed discussion on stability.

Let us consider functions $f$ and $\tilde{f}$ with ${| f(x) - \tilde{f}(x) | \leq \varepsilon}$ for $x \in [a,b]$. 
Then, we have 
\begin{equation}\label{eq:stab-I}
    \left| I[f] - I[\tilde{f}] \right| 
        \leq \int_a^b \left| \left( f(x) - \tilde{f}(x) \right) \omega(x) \right| \intd x 
        \leq \varepsilon K_\omega 
\end{equation} 
for the weighted integral $I$, where we define 
\begin{equation}\label{eq:K-omega}
    K_\omega := \int_a^b |\omega(x)| \intd x.
\end{equation} 
Thus, the growth of errors in the integral $I$ is bounded by the factor $K_\omega$. 
A similar behavior is desired for QRs approximating $I$. 
Denoting the vector of weights $(\omega_1,\dots,\omega_N)^T$ by $\vec{\omega}_N$, we have 
\begin{equation}\label{eq:stab-Q}
    \left| Q_N[f] - Q_N[\tilde{f}] \right| 
        \leq \sum_{n=1}^N \left| \omega_n \left( f(x_n) - \tilde{f}(x_n) \right) \right| \intd x 
        \leq \varepsilon \kappa(\vec{\omega}_N)
\end{equation} 
for the $N$-point QR $Q_N$, where we define 
\begin{equation}
    \kappa(\vec{\omega}_N) := \sum_{n=1}^N \left| \omega_n \right|.
\end{equation} 
$\kappa(\vec{\omega}_N)$ is a common stability measure for QRs. 
Unfortunately, the factor $\kappa(\vec{\omega}_N)$, which bounds the propagation of errors, might grow for increasing 
$N$. 
Hence, the QR might become less accurate even though more and more data is used. 
This problem is encountered by the concept of stability. 

\begin{definition}\label{def:stability}
  We call (a sequence of) $N$-point QRs $(Q_N)_{N \in \N}$ with quadrature weights ${(\vec{\omega}_N)_{N 
\in \N}}$ \emph{stable} if $\kappa(\vec{\omega}_N)$ is uniformly bounded w.\,r.\,t.\ $N$, i.\,e., if 
  \begin{equation}
    \sup_{N \in \N} \kappa(\vec{\omega}_N) < \infty 
  \end{equation}
  holds. 
\end{definition} 

\begin{remark}
  Note that the inequalities \eqref{eq:stab-I} and \eqref{eq:stab-Q} can be generalized. 
  For instance, by applying the H\"older inequality, we get 
  \begin{align}
    | I[f] - I[\tilde{f}] | & \leq ||f-\tilde{f}||_{L^p} \cdot ||\omega||_{L^q}, \\ 
    | Q[f] - Q[\tilde{f}] | & \leq ||f-\tilde{f}||_{p} \cdot ||\vec{\omega}_N||_{q}
  \end{align}
  with $1\leq p,q \leq \infty$ and $\frac{1}{p} + \frac{1}{q} = 1$, assuming 
  $f-\tilde{f} \in C^0([a,b]) \cap L^p([a,b])$ and 
  ${\omega \in L^q([a,b])}$.
  Also Definition \ref{def:stability} could be adapted accordingly. 
  In this work, we only focus on the case of $p = \infty$ and $q = 1$. 
  This case allows us to simply bound the discrete norm $||f-\tilde{f}||_{p}$ by the continuous 
norm $||f-\tilde{f}||_{L^p}$; that is, $||f-\tilde{f}||_{p} \leq ||f-\tilde{f}||_{L^p}$. 
\end{remark}

Note that if all quadrature weights are nonnegative and the QRs have degree of exactness $0$, we 
have 
\begin{equation}
  \kappa(\vec{\omega}_N) 
    = \sum_{n=1}^N \omega_n 
    = I[1].
\end{equation}
If the quadrature weights have mixed signs, however, we get ${\kappa(\vec{\omega}_N) > I[1]}$.
This connection between stability of QRs and their weights being nonnegative has been 
utilized in \cite{huybrechs2009stable} and \cite[Chapter 4]{glaubitz2020shock} to construct stable QRs with high degrees of exactness on equidistant and scattered quadrature points. 
Yet, nonnegative weights are only reasonable if also the weight function $\omega$ in \eqref{eq:I} is nonnegative. 
Enforcing nonnegative quadrature weights for general weight functions cannot be considered as reasonable. 
Instead, it seems more convenient to fit the sign of a quadrature weight $\omega_n$ to the sign of the weight function 
$\omega$ at the corresponding quadrature point $x_n$. 
In this work, we refer to QRs with such weights as sign-consistent QRs.

\begin{definition}\label{def:sign-cons}
  Let $Q_N$ be an $N$-point QR with quadrature points $\{x_n\}_{n=1}^N$ and quadrature weights 
$\{ \omega_n \}_{n=1}^N$, which approximates the integral \eqref{eq:I} with weight function ${\omega:[a,b] \to \R}$. 
  We call the QR $Q_N$ \emph{sign-consistent} if 
  \begin{equation}
    \sign \left( \omega_n \right) = \sign \left( \omega(x_n) \right) 
  \end{equation}
  holds for all $n=1,\dots,N$.
\end{definition} 

In Definition \ref{def:sign-cons}, $\sign(\cdot)$ denotes the usual \emph{sign function} defined as 
\begin{equation}
  \sign: \R \to \R, \quad 
  \sign(x) :=
  \begin{cases}
    -1 & \text{if }  x < 0, \\
    \ \ 1 & \text{if } x \geq 0. 
  \end{cases}
\end{equation} 
Implicitly, we have already noted the following proposition. 

\begin{proposition}
   Let $\left( Q_N \right)_{N \in \N}$ be a sequence of $N$-point QRs approximating the integral 
\eqref{eq:I} with nonnegative weight function $\omega$. 
   If every $Q_N$ has degree of exactness $0$ and is sign-consistent, the sequence of QRs $\left( 
Q_N \right)_{N \in \N}$ is stable. 
\end{proposition}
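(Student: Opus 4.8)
The plan is to show that the stability measure $\kappa(\vec{\omega}_N)$ is in fact constant in $N$, equal to $K_\omega$, so that its supremum is trivially finite. The argument splits into two observations that are then combined.

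First, I would use the sign-consistency hypothesis together with the nonnegativity of $\omega$ to conclude that all quadrature weights are nonnegative. Because $\omega(x) \geq 0$ for every $x \in [a,b]$, in particular $\omega(x_n) \geq 0$, and hence $\sign(\omega(x_n)) = 1$ by the convention fixed right after Definition \ref{def:sign-cons}. Sign-consistency then yields $\sign(\omega_n) = 1$, i.e.\ $\omega_n \geq 0$ for all $n = 1, \dots, N$. Consequently the stability measure simplifies to $\kappa(\vec{\omega}_N) = \sum_{n=1}^N |\omega_n| = \sum_{n=1}^N \omega_n$.

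Second, I would invoke the degree-of-exactness-$0$ assumption. Applying the exactness condition \eqref{eq:exactness-cond} to the constant $p \equiv 1 \in \mathbb{P}_0([a,b])$ gives $Q_N[1] = I[1]$, that is $\sum_{n=1}^N \omega_n = \int_a^b \omega(x) \intd x$. Since $\omega \geq 0$, the right-hand side equals $\int_a^b |\omega(x)| \intd x = K_\omega$ from \eqref{eq:K-omega}. Combining this with the first step yields $\kappa(\vec{\omega}_N) = K_\omega$ for every $N \in \N$, whence $\sup_{N \in \N} \kappa(\vec{\omega}_N) = K_\omega < \infty$, and the sequence is stable by Definition \ref{def:stability}.

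There is no genuine obstacle here; the statement is essentially a bookkeeping consequence of the two hypotheses. The only point requiring care is the identification $I[1] = K_\omega$, which relies decisively on $\omega$ being nonnegative---this is precisely the step that breaks down for weight functions of mixed sign and thereby motivates the separate treatment of stability and sign-consistency in the remainder of the paper.
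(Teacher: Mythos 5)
Your proposal is correct and follows essentially the same route as the paper: sign-consistency plus nonnegativity of $\omega$ forces nonnegative weights, and exactness on constants gives $\kappa(\vec{\omega}_N) = Q_N[1] = I[1] < \infty$. The only cosmetic difference is that you additionally identify $I[1]$ with $K_\omega$, which the paper's proof does not bother to do.
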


\begin{proof} 
  Let $N \in \N$ and let $Q_N$ be the corresponding $N$-point QR with quadrature weights $\vec{\omega}_N$. 
  Since the weight function $\omega$ is nonnegative and the QR $Q_N$ has degree of exactness $0$ and is 
sign-consistent, all the quadrature weights are nonnegative, and we have 
  \begin{equation}\label{eq:proof-1}
    \kappa(\vec{\omega}_N) = Q_N[1] = I[1].
  \end{equation}
  The assertion follows from noting 
  \begin{equation}
    \sup_{N \in \N} \kappa(\vec{\omega}_N) = I[1] < \infty.
  \end{equation}
\end{proof}

Of course, there are stable (sequences of) QRs which are not sign-consistent, for instance, some  
composite Newton--Cotes rules. 
For nonnegative weight functions, sign-consistency can therefore be regarded as a stronger property than stability. 
For general weight functions, however, this connection breaks down as \eqref{eq:proof-1} is not satisfied anymore.
Still, a weaker connection can be shown and is stated in the following theorem. 

\begin{theorem}\label{thm:sign-stab-connection}
  Let $\left( Q_N \right)_{N \in \N}$ be a sequence of $N$-point QRs approximating the 
integral \eqref{eq:I} with weight function $\omega$. 
  Moreover, let every $Q_N$ be sign-consistent, and assume that 
  \begin{equation}\label{eq:thm-assumption}
    \lim_{N \to \infty} Q_N[\sign(\omega)] = I[\sign(\omega)]
  \end{equation}
  holds. 
  Then, we have 
  \begin{equation}
    \lim_{N \to \infty} \kappa(\vec{\omega}_N) = K_\omega.
  \end{equation}
\end{theorem}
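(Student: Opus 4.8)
The plan is to reduce the theorem to two defining identities that express the stability measure $\kappa(\vec{\omega}_N)$ as a quadrature and the constant $K_\omega$ as an integral, both applied to $\sign(\omega)$. The single elementary fact driving everything is that the sign convention fixed above satisfies $x\,\sign(x) = |x|$ for every real $x$; a case distinction over $x<0$, $x=0$, $x>0$ confirms this, and in particular the boundary value $\sign(0)=1$ is harmless since $0\cdot 1 = |0|$.

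First I would exploit sign-consistency. Because $\sign(\omega_n)=\sign(\omega(x_n))$ for each $n$, we may write $|\omega_n| = \omega_n \sign(\omega_n) = \omega_n \sign(\omega(x_n))$, so that summation gives
\begin{equation*}
  \kappa(\vec{\omega}_N) = \sum_{n=1}^N |\omega_n| = \sum_{n=1}^N \omega_n \sign(\omega(x_n)) = Q_N[\sign(\omega)].
\end{equation*}
In exactly the same way, applying $x\,\sign(x)=|x|$ pointwise to the integrand shows that the normalizing constant is itself an exact weighted integral of the sign function:
\begin{equation*}
  K_\omega = \int_a^b |\omega(x)| \intd x = \int_a^b \sign(\omega(x))\,\omega(x) \intd x = I[\sign(\omega)].
\end{equation*}

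With these two identities established, the conclusion is immediate from the hypothesis \eqref{eq:thm-assumption}: passing to the limit,
\begin{equation*}
  \lim_{N\to\infty} \kappa(\vec{\omega}_N) = \lim_{N\to\infty} Q_N[\sign(\omega)] = I[\sign(\omega)] = K_\omega.
\end{equation*}

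I do not expect any genuine obstacle. The entire content of the argument is the recognition that sign-consistency is precisely the condition turning $\sum_{n} |\omega_n|$ into $Q_N[\sign(\omega)]$, and that $K_\omega = I[\sign(\omega)]$ holds by construction; the assumption \eqref{eq:thm-assumption} then closes the gap between the discrete and continuous quantities. The only place warranting a second's attention is the vanishing case ($\omega(x_n)=0$ or $\omega(x)=0$) arising from the chosen sign convention, but as noted this contributes nothing to either identity.
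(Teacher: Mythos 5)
Your proposal is correct and follows essentially the same route as the paper's proof: the identity $|r|=\sign(r)\,r$ together with sign-consistency turns $\kappa(\vec{\omega}_N)$ into $Q_N[\sign(\omega)]$ and $K_\omega$ into $I[\sign(\omega)]$, after which assumption \eqref{eq:thm-assumption} finishes the argument. Your remark on the harmlessness of the convention $\sign(0)=1$ is a small but welcome addition not made explicit in the paper.
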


\begin{proof}
  Since every $Q_N$ is sign-consistent and since $|r| = \sign(r)r$ holds for $r \in \R$, we have 
  \begin{equation}
  \begin{aligned}
    \kappa(\vec{\omega}_N) 
      = \sum_{n=1}^N \sign(\omega_n) \omega_n 
      = \sum_{n=1}^N \sign(\omega(x_n)) \omega_n 
      = Q_N[ \sign(\omega) ].
  \end{aligned}
  \end{equation}
  Next, assumption \eqref{eq:thm-assumption} provides us with  
  \begin{equation}
    \lim_{N \to \infty} \kappa(\vec{\omega}_N) = I[ \sign(\omega) ].
  \end{equation}
  Finally, this yields 
  \begin{equation}
    \lim_{N \to \infty} \kappa(\vec{\omega}_N) 
      = \int_a^b \sign( \omega(x) ) \omega(x) \intd x 
      = K_\omega
  \end{equation}
  and therefore the assertion.
\end{proof}

Note that for a nonnegative weight function $\omega \geq 0$, \eqref{eq:thm-assumption} can be replaced with 
\begin{equation}
  \lim_{N \to \infty} Q_N[1] = I[1],
\end{equation}
which is a fairly weak assumption. 
Yet, wheter \eqref{eq:thm-assumption} is also satisfied for a more general weight function depends on the weight function itself (e.\,g., if only finitely many jump discontinuities occur in $\sign(\omega)$) and how well the QRs $Q_N$ can handle these discontinuities. 
Note that assumption \eqref{eq:thm-assumption} could be ensured 
by adding $Q_N[ \sign(\omega) ] = I[ \sign(\omega) ]$ to the exactness conditions 
\eqref{eq:exactness-cond}---assuming the resulting system of linear equations remains solvable. 
This will be investigated in future works. 
Here, we treat stable (sequences of) QRs and sign-consistent QRs as two separate classes. 

\begin{remark}
  It should be noted that while sign-consistency might be related to stability, at least in some cases, it is neither 
necessary nor sufficient for exactness. 
  For instance, interpolatory QRs for $\omega(x) = 1$ (and equidistant quadrature points), having degree of exactness 
$d=N-1$, are known to have negative quadrature weights for $N=9$ and $N \geq 11$; see 
\cite[Chapter 4]{glaubitz2020shock}. 
  On the other hand, to show that sign-consistency is not sufficient for exactness, we can consider a 
simple Riemann sum ${Q_N[f] = \sum_{n=1}^N \frac{\omega(x_n)}{N} f(x_n)}$. 
  Obviously, this QR is sign-consistent, yet it does not even have degree of exactness $d=0$.  
\end{remark} 
\section{Stable QRs for general weight functions} 
\label{sec:LS-QRs}

In this section, we construct stable (sequences of) QRs for equidistant and scattered data with high 
degrees of exactness and for general weight functions.  
Let $d \in \N$, and let $\{(x_n,f(x_n))\}_{n=1}^N$ be a set of data with ${N > d}$. 
Further, let us denote the vector of quadrature points ${\left( x_1, \dots, x_N \right)^T \in [a,b]^N}$ by $\vec{x}_N$. 
We are interested in the construction of stable (sequences of) $N$-point QRs ${\left( Q_N \right)_{N \in 
\N}}$ with quadrature points ${\left( \vec{x}_N \right)_{N \in \N}}$ and degree of exactness $d$.

\subsection{Formulation as a LS problem} 

Let $\{\varphi_k\}_{k=0}^d$ be a basis of $\mathbb{P}_d([a,b])$, 
and let us choose $N > d$, i.\,e., a larger number of quadrature points than technically needed to 
construct an (interpolatory) QR with degree of exactness $d$. 
Then, the exactness conditions \eqref{eq:exactness-cond} become an underdetermined system of linear equations 
\begin{equation}\label{eq:linear-system}
  A \vec{\omega}_N = \vec{m} 
\end{equation} 
for the weights $\vec{\omega}_N \in \R^N$, where the matrix
\begin{equation}\label{eq:A}
  A = \left( \varphi_k(x_n) \right)_{k=0,n=1}^{d,N} \in \R^{(d+1)\times N}
\end{equation}
contains the function values of the basis elements at the quadrature points and the vector 
\begin{equation}\label{eq:m}
  \vec{m} = \left( I[\varphi_k] \right)_{k=0}^d \in \R^{d+1}
\end{equation}
contains the \emph{moments} of the basis elements. 
The underdetermined system of linear equations \eqref{eq:linear-system} induces an ($N-d-1$)-dimensional affine 
subspace of solutions \cite{golub2012matrix}, which we denote by 
\begin{equation}
  \Omega := \left\{ \vec{\omega}_N \big| \ A \vec{\omega}_N = \vec{m} \right\}.
\end{equation}
Note that for every $\vec{\omega}_N \in \Omega$, the resulting $N$-point QR has degree of exactness $d$. 
From $\Omega$ we want to determine the \emph{LS solution} $\vec{\omega}_N^{\mathrm{LS}}$ which minimizes the 
Euclidean norm
\begin{equation}
  \norm{\vec{\omega}_N}_2 := \left( \sum_{n=1}^N \left| \omega_n \right|^2 \right)^{\frac{1}{2}}, 
\end{equation}
i.\,e., we want to determine $\vec{\omega}_N^{\mathrm{LS}}$ such that
\begin{equation}
  \vec{\omega}_N^{\mathrm{LS}} = \argmin_{ \vec{\omega}_N \in \Omega } \ \norm{\vec{\omega}_N}_2 
\end{equation} 
is satisfied. 
The LS solution $\vec{\omega}_N^{\mathrm{LS}}$ exists and is unique \cite{golub2012matrix}. 
At least formally, $\vec{\omega}_N^{\mathrm{LS}}$ can be obtained by  
\begin{equation}\label{eq:ls-solution}
  \vec{\omega}_N^{\mathrm{LS}} = A^T \vec{u}, 
\end{equation} 
where $\vec{u}$ is the unique solution of the normal equation 
\begin{equation}\label{eq:normal-eq}
  A A^T \vec{u} = \vec{m}.
\end{equation}
Yet, the above representation should not be solved numerically, since the normal equation \eqref{eq:normal-eq} tends to 
be ill-conditioned. 
The real advantage of this approach is revealed when incorporating the concept of discrete orthogonal polynomials (DOPs).

\subsection{DOPs}

Let $\mathbb{P}_d([a,b])$ be the space of real polynomials on $[a,b]$ with degree at most $d$. 
If $N > d$, we can define an inner product and norm associated with the vector of 
(quadrature) points $\vec{x}_N$ for any pair ${f,g \in \mathbb{P}_d([a,b])}$ as 
\begin{equation}\label{eq:inner-prod}
  \scp{f}{g}_{\vec{x}_N} := \sum_{n=1}^N f(x_n) g(x_n), \quad 
  \norm{f}_{\vec{x}_N}^2 = \scp{f}{f}_{\vec{x}_N};
\end{equation}
see \cite{gautschi2004orthogonal}. 
Then, a basis $\{ \varphi_k \}_{k=0}^d$ of $\mathbb{P}_d([a,b])$ is called a basis of \text{DOPs} if 
\begin{equation}
  \scp{\varphi_k}{\varphi_l}_{\vec{x}_N} = \delta_{kl} :=
  \begin{cases}
    1 & \text{if } k = l, \\ 
    0 & \text{if } k \neq l
  \end{cases} 
\end{equation}
holds for all $k,l=0,\dots,d$. 
Using equidistant points $\vec{x}_N$ in $[a,b]$ with $x_1=a$ and $x_N=b$ results in the normalized discrete 
Chebyshev polynomials \cite[Chapter 1.5.2]{gautschi2004orthogonal}. 
In the Appendix, we have collected some of their properties which come in useful in \S \ref{sub:stability}. 
Using scattered points or a weighted inner product, however, often no explicit formula is known for bases of DOPs. 
In this case, they have to be constructed numerically. 
Available construction strategies are the Stieltjes procedure \cite[Chapter 2.2.3.1]{gautschi2004orthogonal}, the 
(classical) Gram--Schmidt process, and the modified Gram--Schmidt process \cite{trefethen1997numerical}. 
As do many other works \cite{gautschi1997numerical,gelb2008discrete,glaubitz2020shock}, we recommend constructing bases of DOPs using the modified Gram--Schmidt process.

\subsection{Characterizing the LS solution} 

When formulating the underdetermined system of linear equations \eqref{eq:linear-system} w.\,r.\,t.\ a basis of 
DOPs ${\{ \varphi_k \}_{k=0}^d}$, we get 
\begin{equation}
  A A^T = \left( \scp{\varphi_k}{\varphi_l}_{\vec{x}_N} \right)_{k,l=0}^d = I,
\end{equation}
and the normal equation \eqref{eq:normal-eq} reduces to  
\begin{equation}
  \vec{u} = \vec{m}. 
\end{equation}
In this case, the LS solution $\vec{\omega}_N^{\mathrm{LS}}$ is simply given by 
\begin{equation}
  \vec{\omega}_N^{\mathrm{LS}} = A^T \vec{m}. 
\end{equation}
Thus, we have 
\begin{equation}\label{eq:LS-QR-weights}
    \omega_n^{\mathrm{LS}} 
      = \sum_{k=0}^d \varphi_k(x_n) I[\varphi_k] 
      = \sum_{k=0}^d \varphi_k(x_n) \left( \int_a^b \varphi_k(x) \omega(x) \d x \right)
\end{equation}
for $n=1,\dots,N$.
Note that this formula is valid for any set of quadrature points as long as ${N > d}$. 
Finally, we define the $N$-points \emph{LS-QR} with quadrature points 
$\{x_n\}_{n=1}^N$ and with degree of exactness $d$ as 
\begin{equation}\label{eq:LS-QR}
  Q_{d,N}^{\mathrm{LS}}[f] := \sum_{n=1}^N \omega_n^{\mathrm{LS}} f(x_n),
\end{equation}
where the quadrature weights $\omega_n^{\mathrm{LS}}$ are given by \eqref{eq:LS-QR-weights}.

\begin{remark}\label{rem:comp-of-moments}
  It is a general problem that the construction of a QR requires the computation of moments. 
  Here, defining the quadrature weights of the LS-QR \eqref{eq:LS-QR} by \eqref{eq:LS-QR-weights}, we are in need of computing the moments 
  \begin{equation}\label{eq:compute-moments}
    I[\varphi_k] = \int_a^b \varphi_k(x) \omega(x) \d x
  \end{equation}
  for general bases of DOPs $\{ \varphi_k \}_{k=0}^d$.
  Depending on the weight function $\omega$ and the basis $\{ \varphi_k\}_{k=0}^d$, the exact evaluation of 
\eqref{eq:compute-moments} might be impractical or even impossible. 
  Hence, the basic idea is to approximate the continuous linear functional $I$ by a discrete linear functional 
$L_J$, which is unrelated to the LS-QR being created from the moments.
  In our implementation, we have calculated the moments $I[\varphi_k]$ numerically by using a Gauss--Legendre (GL) QR   
  \begin{equation}\label{eq:comp-moments}
    I[\varphi_k] 
      \approx L_J[\varphi_k] 
      := \sum_{j=1}^J \omega_j^{\mathrm{GL}} \varphi_k(x^{\mathrm{GL}}_{j}) \omega(x^{\mathrm{GL}}_{j})
  \end{equation}
  on a large set of GL points $\{ x^{\mathrm{GL}}_j \}_{j=1}^J$ and quadrature weights 
  $\{ \omega_j^{\mathrm{GL}} \}_{j=1}^J$.
  In \S \ref{sec:tests}, $J=200$ has been used for all tests. 
  The above procedure is reminiscent of earlier constructions, e.\,g., \cite{gautschi1968construction}. 
  Moreover, it should be noted that the above selection of a GL-QR limits the procedure to smooth weight functions 
$\omega$. 
  For insufficiently smooth weight functions $\omega$, depending on the properties of $\omega$, other QRs might 
be more appropriate, possibly using a larger number of quadrature points $J$ then. 
  In some cases, it might also be possible to use certain recurrence relations or differential/difference 
equations of the polynomials $\varphi_k$ (see \cite{gautschi2004orthogonal}) to simplify the computation of the 
moments.
\end{remark}

\begin{remark}\label{rem:complexity}
  The following list addresses the computational complexity of determining the LS quadrature weights 
$\vec{\omega}_N^{\mathrm{LS}}$ given by \eqref{eq:LS-QR-weights} w.\,r.\,t.\ the parameters $d$, $N$, and $J$ (as 
in Remark \ref{rem:comp-of-moments}).
  \begin{itemize}
    \item \emph{Computation of DOPs}: 
      Consulting \eqref{eq:LS-QR-weights}, we need to compute the values of the $d+1$ DOPs $\varphi_k$, 
${k=0,\dots,d}$, at the $N$ quadrature points $x_n$, ${n=1,\dots,N}$. 
      This is done by the modified Gram--Schmidt process, which yields asymptotic costs of $\mathcal{O}(N(d+1)^2)$; see 
      \cite[Chapter 5.2.8]{golub2012matrix}. 
    \item \emph{Computation of moments}: 
      The computation of the $d+1$ moments $m_k$, ${k=0,\dots,d}$, is described in Remark \ref{rem:comp-of-moments} 
(also see \eqref{eq:comp-moments}) and requires the values of the DOPs at $J$ points. 
      As described above, this yields asymptotic costs of $\mathcal{O}(J(d+1)^2)$. 
      Next, the summation in \eqref{eq:comp-moments} has to be performed for all $d+1$ moments and results in asymptotic 
costs of $\mathcal{O}(J(d+1))$. 
      Overall, the moments are computed with asymptotic costs of $\mathcal{O}(J(d+1)^2)$. 
    \item \emph{Computation of LS weights}: 
      Finally, the $N$ LS quadrature weights $\omega_n^{\mathrm{LS}}$, ${n=1,\dots,N}$, are computed in 
\eqref{eq:LS-QR-weights} by a simple summation of the $d+1$ products $\varphi_k(x_n) m_k$. 
      This is done with asymptotic costs of $\mathcal{O}(N(d+1))$. 
  \end{itemize}
  Altogether, the computational complexity of determining the LS quadrature weights is given by 
  \begin{equation}
    \underbrace{\mathcal{O}(N(d+1)^2)}_{\text{DOPs}} + 
    \underbrace{\mathcal{O}(J(d+1)^2)}_{\text{moments}} + 
    \underbrace{\mathcal{O}(N(d+1))}_{\text{LS weights}} = 
    \mathcal{O}((N+J)d^2).
  \end{equation}
  Hence, the computational complexity is linear in $N$ and $J$ and quadratic in $d$.
\end{remark}

\subsection{Stability} 
\label{sub:stability}

In this section, we investigate stability of (sequences of) LS-QRs  
$( Q_{d,N}^{\mathrm{LS}} )_{n \in \N}$. 
Our main result is Theorem \ref{thm:stability-eq}, which ensures stability for LS-QRs on equidistant points with any 
fixed degree of exactness. 

\begin{theorem}\label{thm:stability-eq}
  Let $d \in \N$ and let $Q_{d,N}^{\mathrm{LS}}$ be an $N$-point LS-QR with equidistant 
quadrature points on $[a,b]$ and with degree of exactness $d$. 
  Then, the sequence of LS-QRs ${( Q_{d,N}^{\mathrm{LS}} )_{N > d}}$ is stable, 
i.\,e.,
  \begin{equation}
    \sup_{N > d} \kappa(\vec{\omega}_N^{\mathrm{LS}}) < \infty
  \end{equation}
  holds, where $\vec{\omega}_N^{\mathrm{LS}}$ are the weights of $Q_{d,N}^{\mathrm{LS}}$.
\end{theorem}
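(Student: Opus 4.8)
The plan is to bound the stability measure $\kappa(\vec{\omega}_N^{\mathrm{LS}}) = \norm{\vec{\omega}_N^{\mathrm{LS}}}_1$ directly from the closed-form weights \eqref{eq:LS-QR-weights}, exploiting the orthonormality $A A^T = \I$ of the DOP basis. First I would recall that, in the DOP formulation, the weight vector is $\vec{\omega}_N^{\mathrm{LS}} = A^T \vec{m}$ with $\vec{m} = (I[\varphi_k])_{k=0}^d$, so that $\norm{\vec{\omega}_N^{\mathrm{LS}}}_2^2 = \vec{m}^T A A^T \vec{m} = \norm{\vec{m}}_2^2$ because $A A^T = \I$. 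Passing from the Euclidean to the $\ell^1$ norm by the standard estimate $\norm{\vec{v}}_1 \le \sqrt{N}\,\norm{\vec{v}}_2$ on $\R^N$ then gives
\begin{equation}
  \kappa(\vec{\omega}_N^{\mathrm{LS}})
    = \norm{\vec{\omega}_N^{\mathrm{LS}}}_1
    \le \sqrt{N}\,\norm{\vec{\omega}_N^{\mathrm{LS}}}_2
    = \sqrt{N}\left( \sum_{k=0}^d I[\varphi_k]^2 \right)^{1/2}.
\end{equation}
The whole theorem thus reduces to showing that each moment decays like $I[\varphi_k] = \mathcal{O}(N^{-1/2})$ uniformly in $N$, so that the prefactor $\sqrt{N}$ is absorbed.

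The key step is a uniform size bound on the normalized discrete Chebyshev polynomials. For equidistant points with $x_1 = a$ and $x_N = b$, the DOPs $\varphi_k$ are exactly these polynomials, and I would invoke the properties collected in the Appendix to obtain a constant $C_k$, independent of $N$, with $\sup_{x \in [a,b]} |\varphi_k(x)| \le C_k N^{-1/2}$ for each fixed $k = 0,\dots,d$. Heuristically this is just the statement that the normalization $\sum_{n=1}^N \varphi_k(x_n)^2 = 1$ forces the $N$ roughly equi-spread values to be of order $N^{-1/2}$, and that $\sqrt{N}\,\varphi_k$ converges uniformly on $[a,b]$ to the corresponding $L^2$-normalized Legendre polynomial. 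With this bound and the definition of $K_\omega$ in \eqref{eq:K-omega}, I estimate the moments by
\begin{equation}
  |I[\varphi_k]|
    = \left| \int_a^b \varphi_k(x)\,\omega(x) \intd x \right|
    \le \sup_{x \in [a,b]} |\varphi_k(x)| \int_a^b |\omega(x)| \intd x
    \le \frac{C_k}{\sqrt{N}}\, K_\omega .
\end{equation}

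Inserting this bound cancels the factor $\sqrt{N}$ exactly and yields the $N$-independent estimate
\begin{equation}
  \kappa(\vec{\omega}_N^{\mathrm{LS}})
    \le \sqrt{N}\left( \sum_{k=0}^d \frac{C_k^2}{N}\, K_\omega^2 \right)^{1/2}
    = K_\omega \left( \sum_{k=0}^d C_k^2 \right)^{1/2},
\end{equation}
whose right-hand side is finite because $d$ is fixed and the $C_k$ do not depend on $N$; taking the supremum over $N > d$ then proves the claim.

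The main obstacle is the uniform polynomial bound $\sup_{x \in [a,b]} |\varphi_k(x)| \le C_k N^{-1/2}$: it must hold uniformly in $x$ over the closed interval (endpoints included, where Legendre-type polynomials attain their extrema) and uniformly in $N$. It is precisely this equi-spread $N^{-1/2}$ scaling that is special to equidistant nodes and is supplied by the discrete Chebyshev estimates in the Appendix; for genuinely scattered nodes no such analogue is available, which is why the theorem is restricted to the equidistant case. Establishing this scaling rigorously---rather than settling for the useless $\kappa(\vec{\omega}_N^{\mathrm{LS}}) = \mathcal{O}(\sqrt{N})$ that would follow from merely bounding the moments by an $N$-independent constant---is where the real work lies.
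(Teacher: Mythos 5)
Your argument is correct and reaches the same conclusion, but it is organized differently from the paper's proof, and the difference is worth noting. The paper bounds $\kappa(\vec{\omega}_N^{\mathrm{LS}})$ by brute force: it applies the triangle inequality to the double sum $\sum_{n}\sum_{k}|\varphi_k(x_n)||I[\varphi_k]|$, bounds every factor by $\norm{\varphi_k}_\infty$, and arrives at $\kappa \leq N K_\omega \sum_{k=0}^d \norm{\varphi_k}_\infty^2$. You instead exploit the isometry $\norm{A^T\vec{m}}_2 = \norm{\vec{m}}_2$ coming from $AA^T = \I$, and pay only a factor $\sqrt{N}$ via the $\ell^1$--$\ell^2$ comparison. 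Both routes then hinge on exactly the same key ingredient, the Appendix bound $\norm{\varphi_k}_\infty^2 \leq (2k+1)/N$ for the normalized discrete Chebyshev polynomials (so your $C_k$ may be taken as $\sqrt{2k+1}$), and both cancel the explicit power of $N$ against this decay. Your version actually yields the sharper constant $K_\omega\bigl(\sum_{k=0}^d(2k+1)\bigr)^{1/2} = K_\omega(d+1)$, compared with the paper's $K_\omega(d+1)^2$. One small point you gloss over: the Appendix bound is only valid once $k \leq k(N)$, i.e.\ for $N \geq N(d) = \frac{1}{2}[(2d-1)^2+1]$, so for the finitely many values $d < N < N(d)$ you must (as the paper does explicitly) simply take the maximum over those remaining $\kappa(\vec{\omega}_N^{\mathrm{LS}})$; this is trivial but should be stated, since otherwise ``taking the supremum over $N>d$'' is not literally covered by your estimate.
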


\begin{proof}
  Let $d \in \N$ and $N > d$. 
  First, we note that 
  \begin{equation}
    \kappa(\vec{\omega}_N^{\mathrm{LS}}) 
      = \ \sum_{n=1}^N \left| \omega_n^{\mathrm{LS}} \right| 
      \overset{\eqref{eq:LS-QR-weights}}{\leq} \ \sum_{n=1}^N \sum_{k=0}^d \left| \varphi_k(x_n) \right| \left| I[\varphi_k] \right|.
  \end{equation}
  A "quick and dirty" estimate provides us with 
  \begin{align}
    \left| \varphi_k(x_n) \right| 
      & \leq \max_{x \in [a,b]} \left| \varphi_k(x) \right| =: \norm{\varphi_k}_{L^\infty}, \\ 
    \left| I[\varphi_k] \right| 
      & \leq \int_a^b \left| \varphi_k(x) \right| \left| \omega(x) \right| \d x 
      \leq \norm{\varphi_k}_{\infty} K_{\omega}; 
  \end{align}
  see \eqref{eq:K-omega}. 
  Thus, we have 
  \begin{equation}
    \kappa(\vec{\omega}_N^{\mathrm{LS}}) 
      \leq \sum_{n=1}^N \sum_{k=0}^d \norm{\varphi_k}_{\infty}^2 K_{\omega} 
      = N K_{\omega} \sum_{k=0}^d \norm{\varphi_k}_{\infty}^2.
  \end{equation}
  Since the quadrature points $\{x_n\}_{n=0}^N$ are equidistant (including the boundary points), the DOPs $\{ \varphi_k 
\}_{k=0}^d$ are given by the transformed and normalized discrete Chebyshev 
polynomials \eqref{eq:DOPs-eq}, as described in the Appendix. 
  Choosing 
  \begin{equation}
    N \geq N(d) := \frac{1}{2} [ (2d-1)^2 + 1 ],
  \end{equation} 
  condition \eqref{eq:cond-k-N} is satisfied for all 
$k=0,\dots,d$, and the DOPs $\varphi_k$ are bounded by 
  \begin{equation}\label{eq:crucial-inequality}
    \norm{\varphi_k}_\infty \leq \frac{1}{\sqrt{h_k}}; 
  \end{equation}
  see \eqref{eq:bound-inf}. 
  Further, this yields 
  \begin{equation}\label{eq:proof2} 
    \norm{\varphi_k}_\infty^2 
      \overset{\eqref{eq:h_k}}{=} \frac{(2k+1) (N-1)! (N-1)!}{(N+k)! (N-k-1)!} 
      = \ \frac{(2k+1) (N-1)! k!}{(N+k)!} \binom{N-1}{k}. 
  \end{equation}
  Next, we note the simple inequalities 
  \begin{equation}
    \binom{N-1}{k} \leq \frac{(N-1)^k}{k!} 
  \end{equation}
  and 
  \begin{equation} 
    \frac{(N-1)!}{(N+k)!} 
      = \frac{1}{N(N+1)\dots(N+k)} 
      \leq \frac{1}{N^{k+1}}. 
  \end{equation}
  Incorporating these inequalities into \eqref{eq:proof2}, we get 
  \begin{equation} 
    \norm{\varphi_k}_\infty^2
      \leq \frac{(2k+1) k! (N-1)^k}{N^{k+1} k!} 
      < \frac{(2k+1)}{N},
  \end{equation}
  yielding 
  \begin{equation} 
    \kappa(\vec{\omega}_N^{\mathrm{LS}}) 
      \leq N K_{\omega} \sum_{k=0}^d \frac{(2k+1)}{N} 
      = K_{\omega} (d+1)^2 
  \end{equation} 
  for $N \geq N(d)$.
  Finally, the claim follows from 
  \begin{equation}
    \kappa(\vec{\omega}_N^{\mathrm{LS}}) 
      \leq \max\left\{ \kappa(\vec{\omega}_{d+1}^{\mathrm{LS}}), \dots, \kappa(\vec{\omega}_{N(d)-1}^{\mathrm{LS}}), 
K_{\omega} (d+1)^2 
\right\}.
  \end{equation}
\end{proof}

\begin{remark}
  We note that in the above proof we have essentially bounded the stability measure 
$\kappa(\vec{\omega}_N^{\mathrm{LS}})$ by $K_{\omega} (d+1)^2$. 
  For increasing degree of exactness $d$, this constant becomes fairly large, and one might say that the LS-QRs are only 
\emph{pseudostable} for general weights. 
  In this case, the growth of errors is still bounded, yet only by a large factor. 
  In our numerical tests, however, we have observed the stability measure $\kappa(\vec{\omega}_N^{\mathrm{LS}})$ to 
converge to fairly small limits. 
  Moreover, note that in many cases $\kappa(\vec{\omega}_N^{\mathrm{LS}})$ converges to 
the continuous bound $K_\omega$ given by \eqref{eq:K-omega} for all degrees of exactness $d$; 
  see Theorem \ref{thm:sign-stab-connection}. 
  This indicates that the LS-QRs have even more desirable stability properties than we 
have proven in Theorem \ref{thm:stability-eq}.
\end{remark}

The proof of Theorem \ref{thm:stability-eq} heavily relies on the bound 
\begin{equation}
  \norm{\varphi_k}_\infty^2 \leq \frac{(2k+1)}{N}
\end{equation}
for the DOPs $\varphi_k$ with $N \geq N(k)$. 
Unfortunately, we have not been able to find similar results for DOPs on nonequidistant 
points 
in the literature. 
Yet, by assuming a similar bound, we can also prove the following theorem. 

\begin{theorem}\label{thm:stability-rand}
  Let $d \in \N$, and let $Q_{d,N}^{\mathrm{LS}}$ be an $N$-point LS-QR with distinct 
points $\vec{x}_N$ in $[a,b]$ and with degree of exactness $d$. 
  Further, let ${\{ \varphi_k \}_{k=0}^d}$ be the basis of DOPs corresponding to the inner 
product $\scp{\cdot}{\cdot}_{\vec{x}_N}$, and assume that there are constants ${C(d) > 0}$ and ${N(d) > d}$ such 
that 
  \begin{equation}\label{eq:assumption}
    \norm{\varphi_k}_\infty^2 \leq \frac{C(d)}{N} \quad \forall k=0,\dots,d \ \forall N \geq N(d) .
  \end{equation}
  Then, the sequence of LS-QRs ${( Q_{d,N}^{\mathrm{LS}}[f] )_{N > d}}$ is stable, i.\,e., 
  \begin{equation}
    \sup_{N > d} \kappa(\vec{\omega}_N^{\mathrm{LS}}) < \infty
  \end{equation}
  holds, where $\vec{\omega}_N^{\mathrm{LS}}$ are the quadrature weights of $Q_{d,N}^{\mathrm{LS}}$.
\end{theorem}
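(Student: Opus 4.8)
The plan is to follow the proof of Theorem \ref{thm:stability-eq} almost verbatim; the only structural change is that the explicit bound \eqref{eq:crucial-inequality} on the transformed, normalized discrete Chebyshev polynomials is now \emph{postulated} in \eqref{eq:assumption} rather than derived. The entire argument thus reduces to substituting the assumed rate $C(d)/N$ for the equidistant-specific rate $(2k+1)/N$ and tracking the resulting constant.

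First I would fix $d \in \N$ and $N > d$ and start from the triangle inequality applied to the closed-form weights \eqref{eq:LS-QR-weights}, giving
\begin{equation}
  \kappa(\vec{\omega}_N^{\mathrm{LS}})
    = \sum_{n=1}^N \left| \omega_n^{\mathrm{LS}} \right|
    \leq \sum_{n=1}^N \sum_{k=0}^d \left| \varphi_k(x_n) \right| \left| I[\varphi_k] \right|.
\end{equation}
Next I would reuse the two ``quick and dirty'' estimates $|\varphi_k(x_n)| \leq \norm{\varphi_k}_\infty$ and $|I[\varphi_k]| \leq \norm{\varphi_k}_\infty K_\omega$ from the previous proof; crucially, neither of these used equidistance, so both remain valid for arbitrary distinct points $\vec{x}_N$. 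This yields
\begin{equation}
  \kappa(\vec{\omega}_N^{\mathrm{LS}})
    \leq N K_\omega \sum_{k=0}^d \norm{\varphi_k}_\infty^2.
\end{equation}

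The decisive step is then to invoke the hypothesis \eqref{eq:assumption}: for every $N \geq N(d)$ we have $\norm{\varphi_k}_\infty^2 \leq C(d)/N$ uniformly in $k$, so the factor $N$ cancels and
\begin{equation}
  \kappa(\vec{\omega}_N^{\mathrm{LS}})
    \leq N K_\omega \sum_{k=0}^d \frac{C(d)}{N}
    = K_\omega (d+1) C(d)
\end{equation}
for all $N \geq N(d)$. Finally, since only the finitely many indices $N = d+1, \dots, N(d)-1$ remain and each produces a finite stability measure, the supremum is controlled by
\begin{equation}
  \sup_{N > d} \kappa(\vec{\omega}_N^{\mathrm{LS}})
    \leq \max\left\{ \kappa(\vec{\omega}_{d+1}^{\mathrm{LS}}), \dots, \kappa(\vec{\omega}_{N(d)-1}^{\mathrm{LS}}), \ K_\omega (d+1) C(d) \right\} < \infty,
\end{equation}
which is the assertion.

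I do not expect a genuine obstacle inside this proof: all the analytic difficulty has been displaced into the assumption \eqref{eq:assumption}. The real hard part---and the reason the authors phrase the growth bound as a hypothesis rather than a lemma---is \emph{establishing} \eqref{eq:assumption} for DOPs on scattered points, for which, as the surrounding text notes, no analogue of the discrete Chebyshev estimate appears to be available in the literature. Within the scope of the theorem as stated, the only point worth double-checking is that the two elementary estimates on $|\varphi_k(x_n)|$ and $|I[\varphi_k]|$ are truly independent of the point distribution, which they are, so that the argument genuinely depends on the equidistant geometry only through \eqref{eq:assumption}.
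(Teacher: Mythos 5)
Your proposal is correct and matches the paper's own proof essentially step for step: the same reduction to $\kappa(\vec{\omega}_N^{\mathrm{LS}}) \leq N K_\omega \sum_{k=0}^d \norm{\varphi_k}_\infty^2$, the same invocation of the hypothesis \eqref{eq:assumption} to obtain the bound $K_\omega (d+1) C(d)$ for $N \geq N(d)$, and the same finite maximum over the remaining indices. No differences worth noting.
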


\begin{proof} 
  The proof is analogous to the one of Theorem \ref{thm:stability-eq}. 
  Let $d \in \N$ and $N > d$.
  Again, we have 
  \begin{equation}
     \kappa(\vec{\omega}_N^{\mathrm{LS}}) 
      \leq N K_{\omega} \sum_{k=0}^d \norm{\varphi_k}_{\infty}^2.
  \end{equation}
  Assuming there are constants $C(d) > 0$ and $N(d) > d$ such that \eqref{eq:assumption} holds yields 
  \begin{equation}
    \norm{\varphi_k}_\infty^2 \leq \frac{C(d)}{N}
  \end{equation}
  for all $k=0,\dots,d$ and $N \geq N(d)$. 
  Thus, we get 
  \begin{equation} 
    \kappa(\vec{\omega}_N^{\mathrm{LS}}) 
      \leq N K_{\omega} \sum_{k=0}^d \frac{C(d)}{N} 
      \leq K_{\omega} (d+1) C(d)
  \end{equation}
  for $N \geq N(d)$. 
  Once more, the claim follows from 
  \begin{equation}
    \kappa(\vec{\omega}_N^{\mathrm{LS}}) 
      \leq \max\left\{ \kappa(\vec{\omega}_{d+1}^{\mathrm{LS}}), \dots, \kappa(\vec{\omega}_{N(d)-1}^{\mathrm{LS}}), 
K_{\omega} (d+1) C(d) 
\right\}.
  \end{equation}
\end{proof}

\section{Sign-consistent QRs for general weight functions} 
\label{sec:NNLS-QRs} 

In this section, we construct sign-consistent QRs with high 
degrees of exactness for scattered data and general weight functions. 
The idea for the construction of stable QRs in \S \ref{sec:LS-QRs} has essentially been to ensure a fixed 
degree of exactness $d$ and to optimize the stability measure $\kappa(\vec{\omega}_N)$ then. 
The idea behind our construction of sign-consistent QRs is somewhat reversed. 
First, we ensure that the QR is sign-consistent, and only afterwards we "optimize" the degree of exactness. 
This approach has already been proposed by Huybrechs \cite{huybrechs2009stable} for nonnegative weight functions. 
Here, we extend it to general weight functions.

\subsection{The NNLS problem} 

The so-called \emph{NNLS} problem is a constrained LS problem, where the 
solution entries are not allowed to become negative. 
Following \cite{lawson1995solving}, the problem is to 
\begin{equation}\label{eq:NNLS-problem}
  \text{minimize} \ \norm{ B \vec{x} - \vec{c} }_2 
  \quad \text{subject to} \quad 
  x_n \geq 0, \ n=1,\dots,N,
\end{equation}
where $B \in \R^{M \times N}$, $\vec{x} \in \R^N$, and $\vec{c} \in \R^M$. 
Moreover, $\norm{\cdot}_2$ again denotes the Euclidean norm, this time in $\R^M$. 
The NNLS problem \eqref{eq:NNLS-problem} always has a solution \cite{lawson1995solving}. 
Note that in our setting, the matrix $B$ is underdetermined, which results in infinitely many solutions.  
In this case, we seek a solution with maximal sparsity, i.\,e., the solution vector has as many zero entries as possible.

\subsection{Formulation as an NNLS problem} 

When concerned with sign-consistent QRs with degree of exactness $d$ for integrals with general weight 
functions $\omega$, we want to 
\begin{equation}\label{eq:NNLS-problem2}
  \text{minimize} \ \norm{ A \vec{\omega} - \vec{m} }_2
\end{equation}
subject to mixed inequality constraints 
\begin{equation}\label{eq:mixed-const}
  \begin{cases}
    \omega_n \geq 0 & \text{if } \omega(x_n) \geq 0, \\ 
    \omega_n < 0 & \text{if } \omega(x_n) < 0,
  \end{cases} \quad
  n=1,\dots,N.
\end{equation} 
Again, the matrix $A$ and the vector $\vec{m}$ are given by \eqref{eq:A} and \eqref{eq:m} and contain the function 
values of the basis elements at the quadrature points and the corresponding moments, respectively. 
Nevertheless, we are able to formulate this problem as a usual NNLS problem \eqref{eq:NNLS-problem} by introducing an 
auxiliary sign-matrix 
\begin{equation}
  S = \operatorname{diag}\left( \sign\left( \omega(x_1) \right), \dots, \sign\left( 
\omega(x_N) \right) \right).
\end{equation}
Then, the LS problem \eqref{eq:NNLS-problem2} with mixed inequality constraints \eqref{eq:mixed-const} can 
be compactly rewritten as 
\begin{equation}\label{eq:NNLS-problem3}
  \text{minimize} \ \norm{ A \vec{\omega} - \vec{m} }_2 
  \quad \text{subject to} \quad 
  S \vec{\omega} \geq 0, 
\end{equation}
where $S \vec{\omega} \geq 0$ is understood in an elementwise sense. 
Finally, by going over to an auxiliary vector 
\begin{equation}
  \vec{u} := S \vec{\omega},
\end{equation}
problem \eqref{eq:NNLS-problem3} can be reformulated as a usual NNLS problem,
\begin{equation}\label{eq:NNLS-problem4}
  \text{minimize} \ \norm{ A S \vec{u} - \vec{m} }_2 
  \quad \text{subject to} \quad 
  u_n \geq 0, \ n=1,\dots,N,
\end{equation} 
since $\vec{\omega} = S^{-1} \vec{u} = S \vec{u}$. 
Let us denote the vector of quadrature weights which result from the NNLS problem \eqref{eq:NNLS-problem4} by 
$\vec{\omega}_N^{\mathrm{NNLS}}$. 
We refer to the corresponding QR 
\begin{equation}
  Q_{d,N}^{\mathrm{NNLS}}[f] := \sum_{n=1}^N \omega_n^{\mathrm{NNLS}} f(x_n)
\end{equation}
as the $N$-points \emph{NNLS-QR} with quadrature points 
$\{x_n\}_{n=1}^N$ and with \emph{approximate degree of exactness} $d$.
Note that if $0 < ||A \vec{\omega} - \vec{m}||_2 < \varepsilon$, the NNLS-QR is not exact for all polynomials up to 
degree $d$. 
Thus, we say that $Q_{d,N}^{\mathrm{NNLS}}$ has ``approximate degree of exactness $d$''. 
Yet, for $\varepsilon \approx 0$, we can expect the NNLS-QR to only have small errors. 
This is further discussed in \S \ref{sec:tests}.  
\section{Numerical results} 
\label{sec:tests}

In this section, we numerically investigate the proposed LS-QR and NNLS-QR. 
We do so for two different weight functions 
\begin{equation}\label{eq:weight_functions}
  \omega(x) = x \sqrt{1-x^3} 
  \quad \text{and} \quad 
  \omega(x) = \cos(20 \pi x)
\end{equation}
with mixed signs on $[-1,1]$ as well as for equidistant and scattered quadrature points. 
The scattered quadrature points are obtained by adding white Gaussian noise to the set of equidistant quadrature 
points $\{x_n\}_{n=1}^N$. 
Thus, the scattered quadrature points $\{\tilde{x}_n\}_{n=1}^N$ are given by 
\begin{equation}\label{eq:noise}
  \tilde{x}_1 = -1, \quad 
  \tilde{x}_N = 1, \quad
  \tilde{x}_n = x_n +  Z_n 
  \quad \text{with} \quad 
  Z_n \in \mathcal{N}\left(0;(4N)^{-2}\right),  
\end{equation}
for $n=2,\dots,N-1$, where the $Z_n$ are independent, identically distributed, and further assumed to not be correlated 
with the $x_n$.

\subsection{Implementation details} 
\label{sub:implementation}

The subsequent numerical tests have been computed in \textsc{Matlab} with double precision. 
This yields a machine precision of $2^{-52}$ or approximately $2.22\cdot10^{-16}$. 
Further, as described in Remark \ref{rem:complexity}, the LS quadrature weights given by \eqref{eq:LS-QR-weights} are 
computed as follows: 
\begin{enumerate}
  \item Matrix $A$ and therefore the nodal values $\varphi_k(x_n)$ are computed by applying the Gram--Schmidt process 
to an initial basis of Legendre polynomials. 
  \item The moments $I[\varphi_k]$ are computed by determining the nodal values of the DOPs $\varphi_k$ (again by the 
Gram--Schmidt process) and of the weight function $\omega$ at a set of $J$ GL points and utilizing the 
GL-QR applied to the integrand $\varphi_k \omega$ then. 
  Also see Remark \ref{rem:comp-of-moments}. 
  \item Finally, the LS quadrature weights are obtained by summing up the products $\varphi_k(x_n) I[\varphi_k]$ over 
$k$; see \eqref{eq:LS-QR-weights}. 
\end{enumerate}
The NNLS quadrature weights, on the other hand, are computed by the available \textsc{Matlab} implementation 
(\emph{lsqnonneg}) of the following NNLS algorithm \cite[Chapter 23]{lawson1995solving}. 
Also see \cite{huybrechs2009stable} or the \textsc{Matlab} documentation of \emph{lsqnonneg}.

\subsection{Stability} 
\label{sub:stability-tests}

We start by investigating stability of the proposed QRs.  
Figure \ref{fig:stab} illustrates the difference between the stability measure $\kappa(\vec{\omega}_N)$ of the LS-QR and the NNLS-QR with degree of exactness $d=10$ and the stability factor $K_{\omega}$, as defined in \eqref{eq:K-omega}, for an increasing number $N$ of equidistant as well as scattered quadrature points.

\begin{figure}[!htb]
  \centering
  \begin{subfigure}[b]{0.45\textwidth}
    \includegraphics[width=\textwidth]{%
      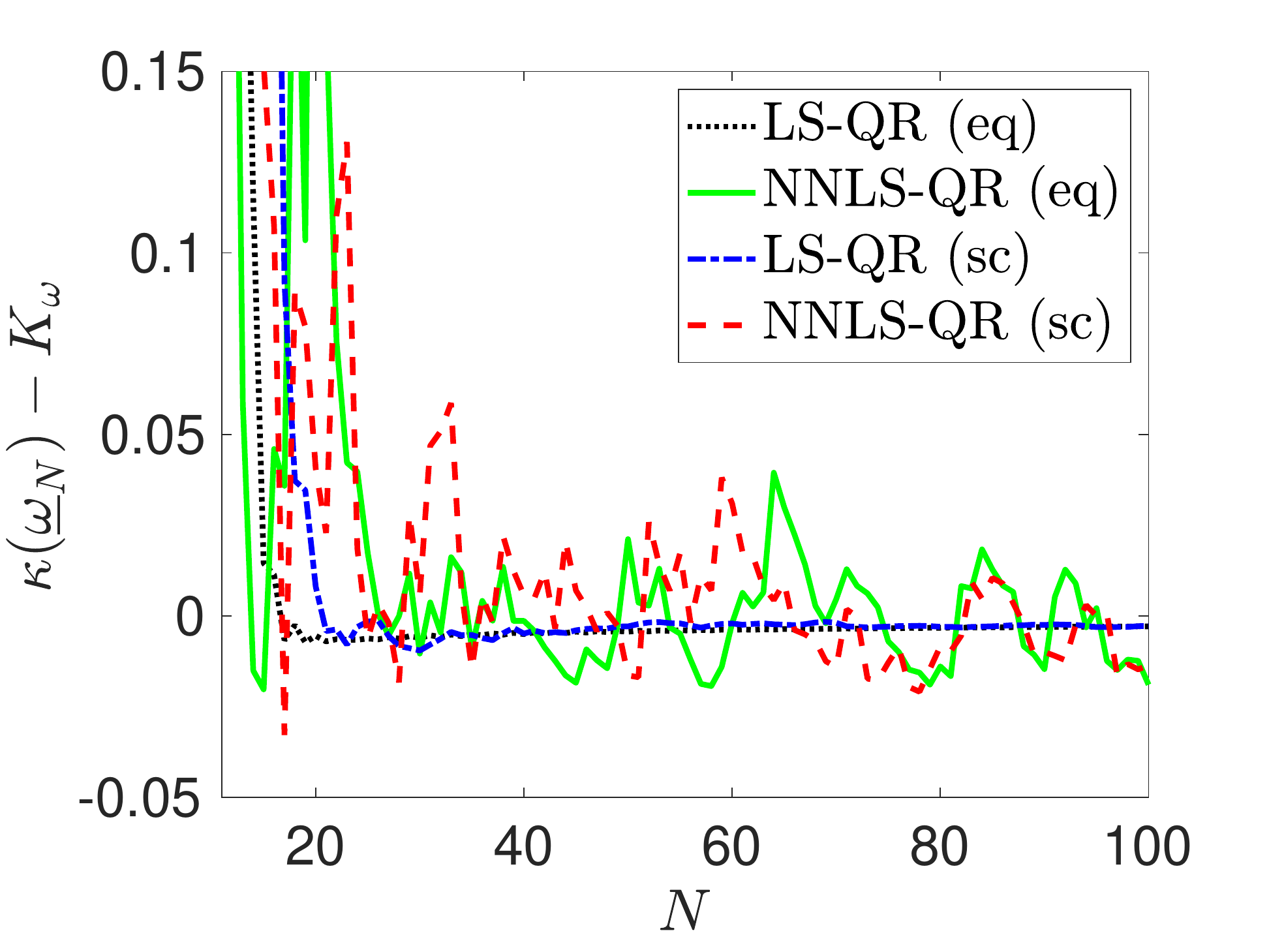} 
    \caption{${\omega(x) = x\sqrt{1-x^3}}$}
    \label{fig:stab-o1-d=10-kappa-K}
  \end{subfigure}%
  ~
  \begin{subfigure}[b]{0.45\textwidth}
    \includegraphics[width=\textwidth]{%
      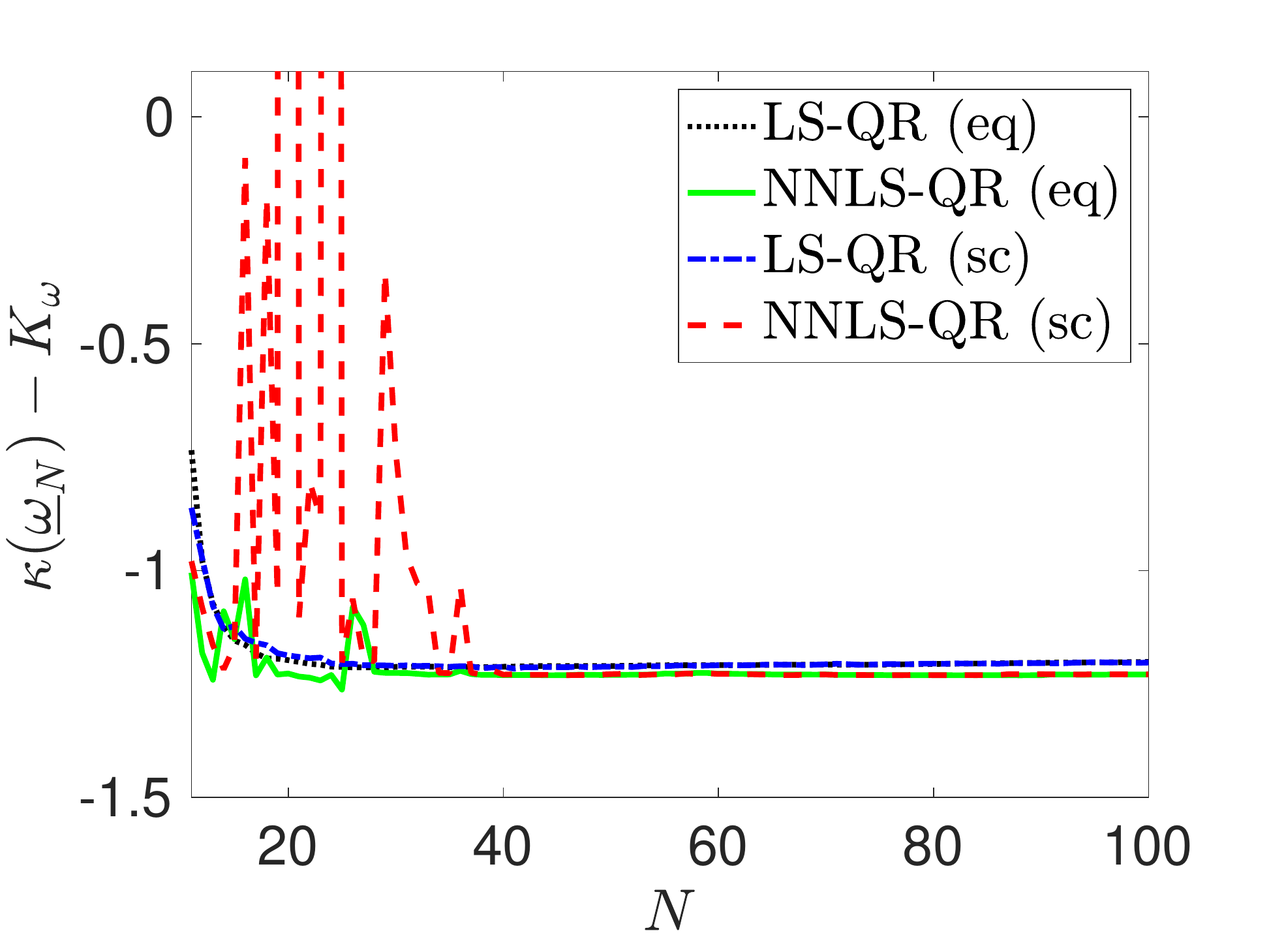}
    \caption{${\omega(x) = \cos(20 \pi x)}$}
    \label{fig:stab-o2-d=10-kappa-K}
  \end{subfigure}%
  \caption{Difference between the stability measure $\kappa(\vec{\omega}_N)$ and the stability factor $K_{\omega}$ for the LS-QR and NNLS-QR for $d=10$, equidistant (eq) as well as scattered (sc) quadrature points, and two different weight functions $\omega$.}
  \label{fig:stab}
\end{figure}

In accordance to Theorem \ref{thm:stability-eq} (and Theorem \ref{thm:stability-rand}), we note that the stability 
measure for the LS-QR is bounded in all cases for increasing $N$. 
A similar behavior can be observed for the NNLS-QR on equidistant as well as scattered quadrature points. 
Yet, the NNLS-QR shows a slightly oscillatory profile in some tests. 
Finally, Figure \ref{fig:stab} also demonstrates that in many cases the stability measure of the LS-QR and NNLS-QR is even lower than the one of the underlying integral $I$; see \eqref{eq:I}.

\subsection{Sign-consistency} 
\label{sub:sign-consistency}

Next, we investigate sign-consistency of the proposed LS-QR and NNLS-QR. 
We measure sign-consistency by the following \emph{sign-consistency measure}:
\begin{equation}
  S_{\omega}(\vec{\omega}_N) = \frac{1}{N} \sum_{n=1}^N \left| \sign(\omega_n) - \sign(\omega(x_n)) \right|.
\end{equation}
It is evident that an $N$-point QR $Q_N$ with quadrature weights $\vec{\omega}_N$ is sign-consistent if 
and only if ${S_{\omega}(\vec{\omega}_N) = 0}$ holds. 

\begin{figure}[!htb]
  \centering
  \begin{subfigure}[b]{0.45\textwidth}
    \includegraphics[width=\textwidth]{%
      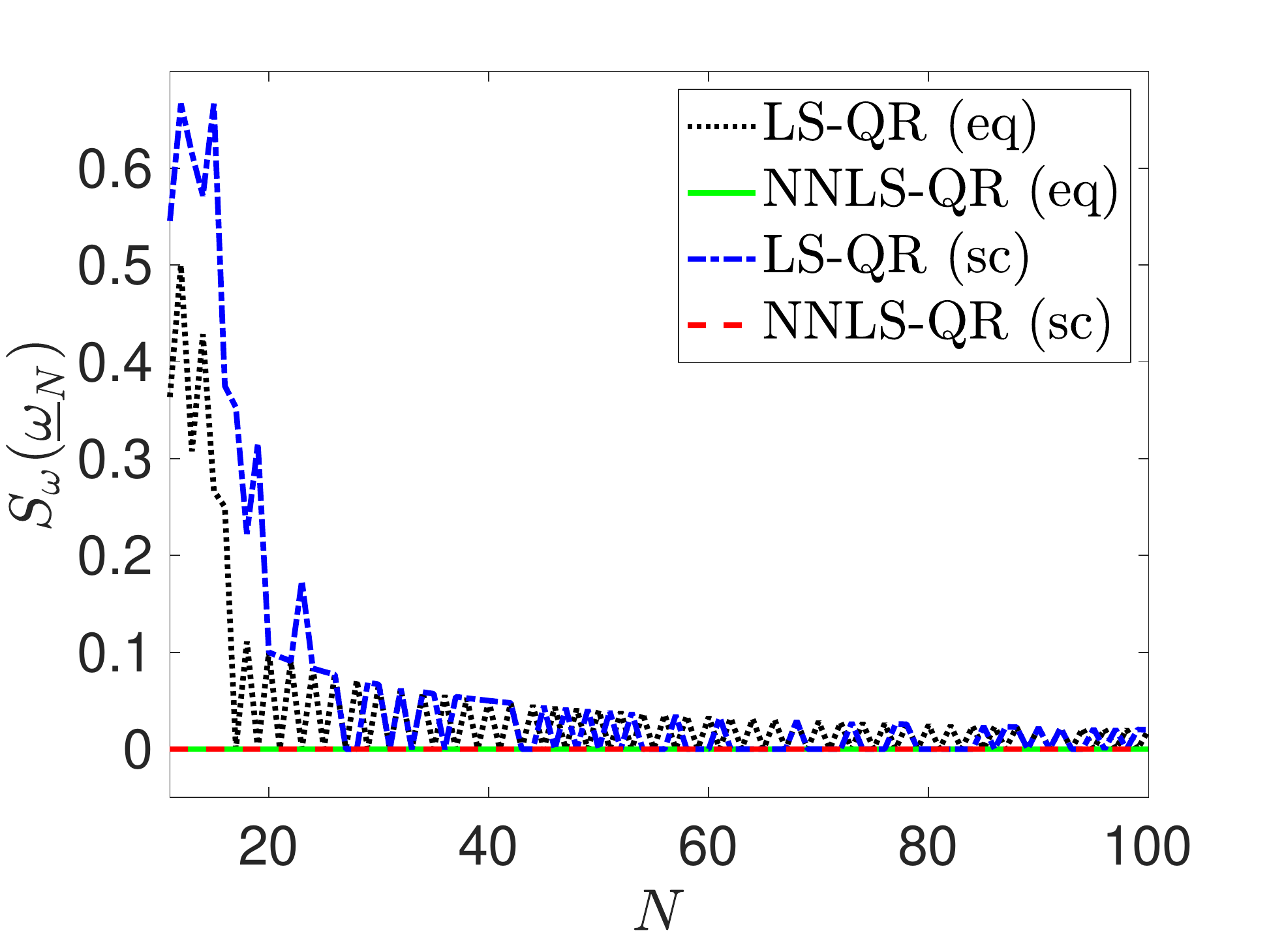}
    \caption{$\omega(x) = x\sqrt{1-x^3}$}
    \label{fig:sign-o1-d=10}
  \end{subfigure}%
  ~
  \begin{subfigure}[b]{0.45\textwidth}
    \includegraphics[width=\textwidth]{%
      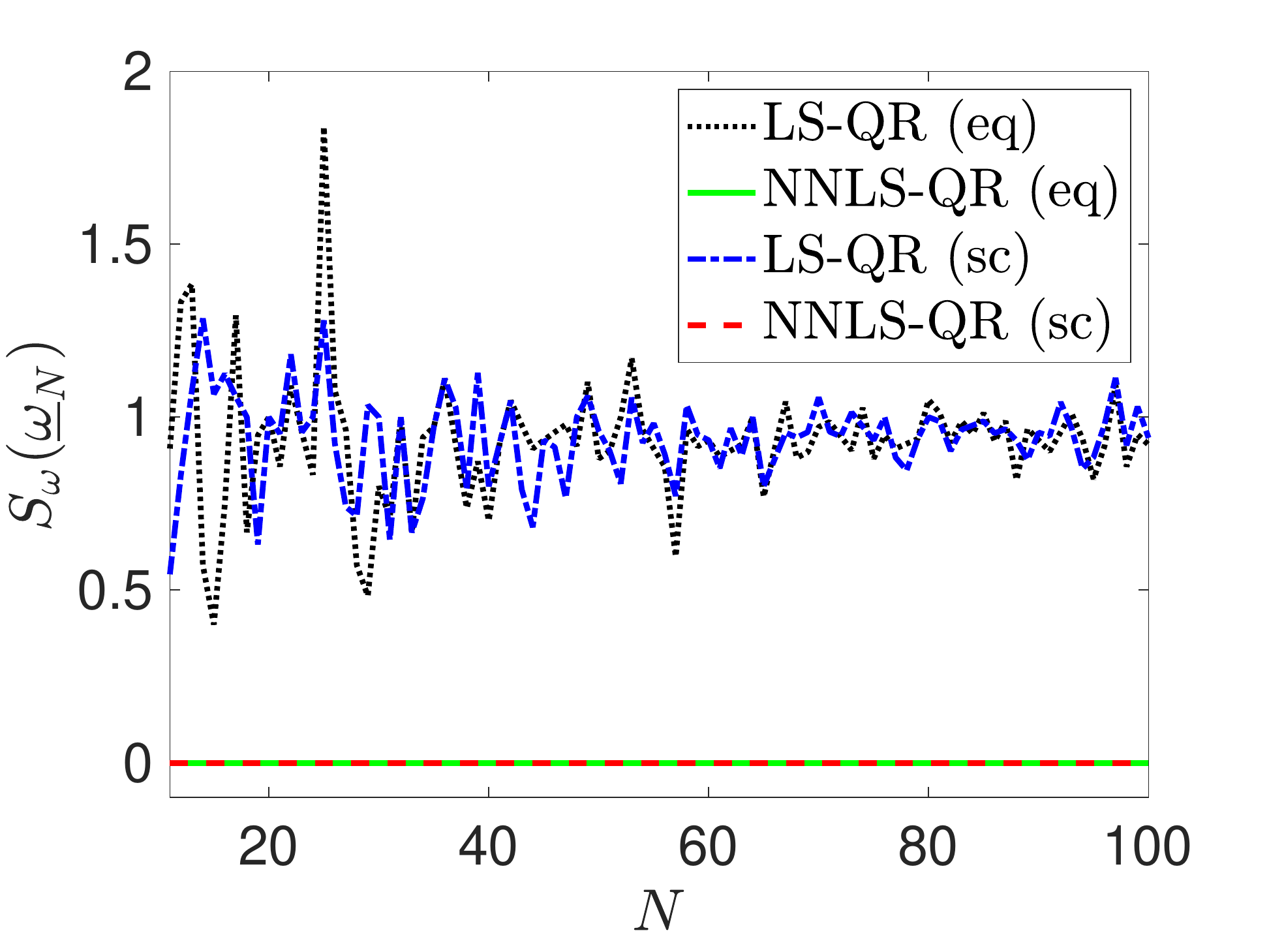}
    \caption{$\omega(x) = \cos(20 \pi x)$}
    \label{fig:sign-o2-d=10}
  \end{subfigure}%
  \caption{Sign-consistency measure $S_{\omega}(\vec{\omega}_N)$ for the LS-QR and NNLS-QR for $d=10$ and equidistant 
(eq) as well as scattered (sc) quadrature points.}
  \label{fig:sign}
\end{figure}

Figure \ref{fig:sign} illustrates the sign-consistency measure $S_{\omega}(\vec{\omega}_N)$ for the LS-QR and the 
NNLS-QR for the two weight 
functions \eqref{eq:weight_functions} with mixed signs, degree of exactness $d=10$, and equidistant as well 
as scattered quadrature points. 
We can observe that, in fact, the NNLS-QR always provides a sign-consistency 
measure of ${S_{\omega}(\vec{\omega}_N)=0}$. 
The LS-QR, on the other hand, is demonstrated to violate sign-consistency in many cases.
This indicates that the sign-consistent NNLS-QR might provide overall superior 
stability properties than the LS-QR.

\subsection{Exactness} 
\label{sub:exactness}

In this subsection, we investigate exactness of the LS-QR and the NNLS-QR.  
Figure \ref{fig:exact} displays the \emph{exactness measure} 
\begin{equation}
  \norm{ A \vec{\omega}_N - \vec{m} }_2
\end{equation}
of both QRs for the same set of parameters as before.

\begin{figure}[!htb]
  \centering
  \begin{subfigure}[b]{0.45\textwidth}
    \includegraphics[width=\textwidth]{%
      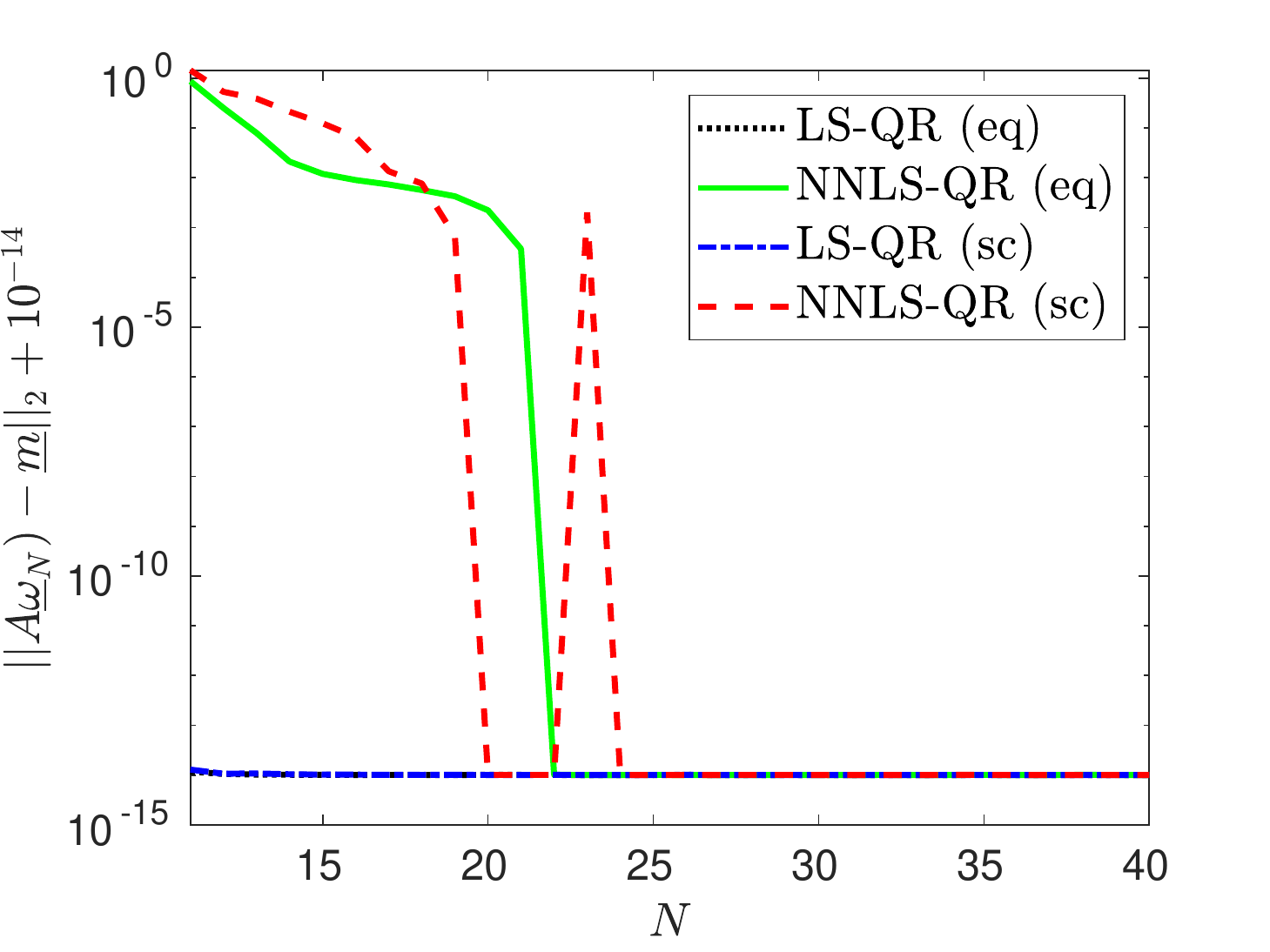}
    \caption{$\omega(x) = x\sqrt{1-x^3}$}
    \label{fig:exact-o1-d=10}
  \end{subfigure}%
  ~
  \begin{subfigure}[b]{0.45\textwidth}
    \includegraphics[width=\textwidth]{%
      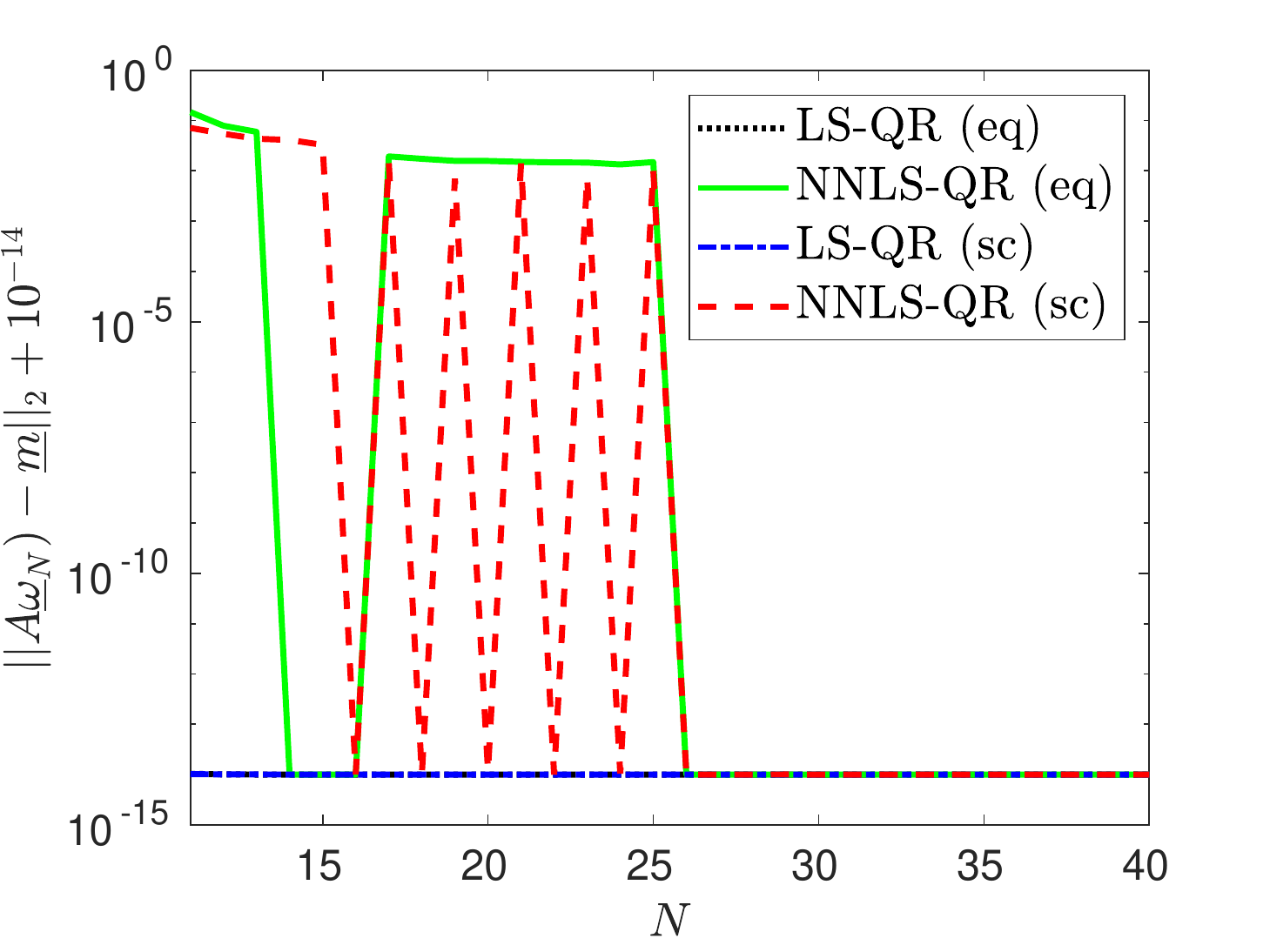}
    \caption{$\omega(x) = \cos(20 \pi x)$}
    \label{fig:exact-o2-d=10}
  \end{subfigure}%
  \caption{Exactness measure $||A \vec{\omega}^{\mathrm{NNLS}} - \vec{m}||_2$ for the LS-QR and NNLS-QR for $d=10$,  
equidistant (eq) as well as scattered (sc) quadrature points, and two different weight functions.}
  \label{fig:exact}
\end{figure}

Note that for an $N$-point QR $Q_N$ with quadrature weights $\vec{\omega}_N$ the minimal value for the 
exactness measure ${|| A \vec{\omega}_N - \vec{m} ||_2 = 0}$ holds if and only if $Q_N$ has degree of exactness 
$d$. 
In accordance to their construction as LS solutions of the linear system of exactness conditions 
\eqref{eq:linear-system}, the LS-QR provides a minimal value of ${|| A \vec{\omega}_N - 
\vec{m} ||_2 = 0}$ in all cases. 
The NNLS-QR, on the other hand, violates the degree of exactness and are 
demonstrated to have a nonzero exactness measure if insufficiently small numbers of quadrature points $N$ are used. 
Yet, for fixed degree of exactness $d$ and increasing $N$, also the NNLS-QR provides degree of exactness $d$.

\subsection{Accuracy for increasing $N$} 
\label{sub:conv-N}

We now come to investigate accuracy of the LS-QR and the NNLS-QR for a fixed 
degree of exactness $d=10$ and an increasing number of quadrature points $N$. 
We compare both QRs with a generalized composite trapezoidal rule applied to the unweighted Riemann 
integral with integrator $f\omega$. 
Further, we consider the two test functions $f(x) = |x|^3$ and $f(x) = e^x$. 
In the following, the errors 
\begin{equation}
  \left| Q_N[f] - I[f] \right|
\end{equation}
are illustrated.
Figure \ref{fig:conv-N-eq} shows the errors for equidistant quadrature points while Figure \ref{fig:conv-N-sc} provides 
the errors for  scattered quadrature points. 
These are once more constructed by adding white Gaussian noise to the set of equidistant quadrature 
points; see \eqref{eq:noise}.

\begin{figure}[!htb]
  \centering
  \begin{subfigure}[b]{0.4\textwidth}
    \includegraphics[width=\textwidth]{%
      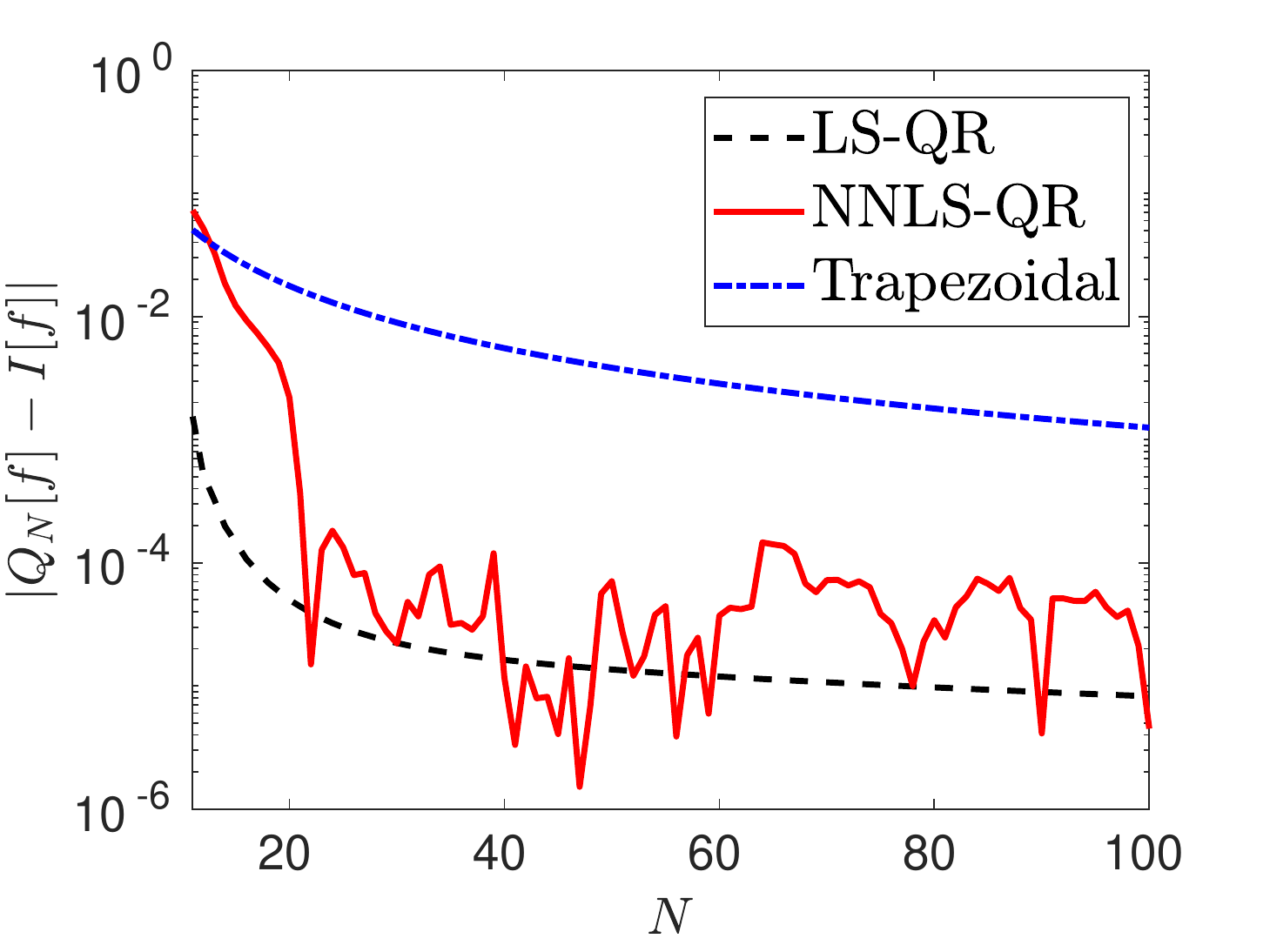}
    \caption{$f(x) = |x|^3$ \& $\omega(x) = x\sqrt{1-x^3}$}
    \label{fig:conv-N-eq-f1-o1-d=10}
  \end{subfigure}%
  ~
  \begin{subfigure}[b]{0.4\textwidth}
    \includegraphics[width=\textwidth]{%
      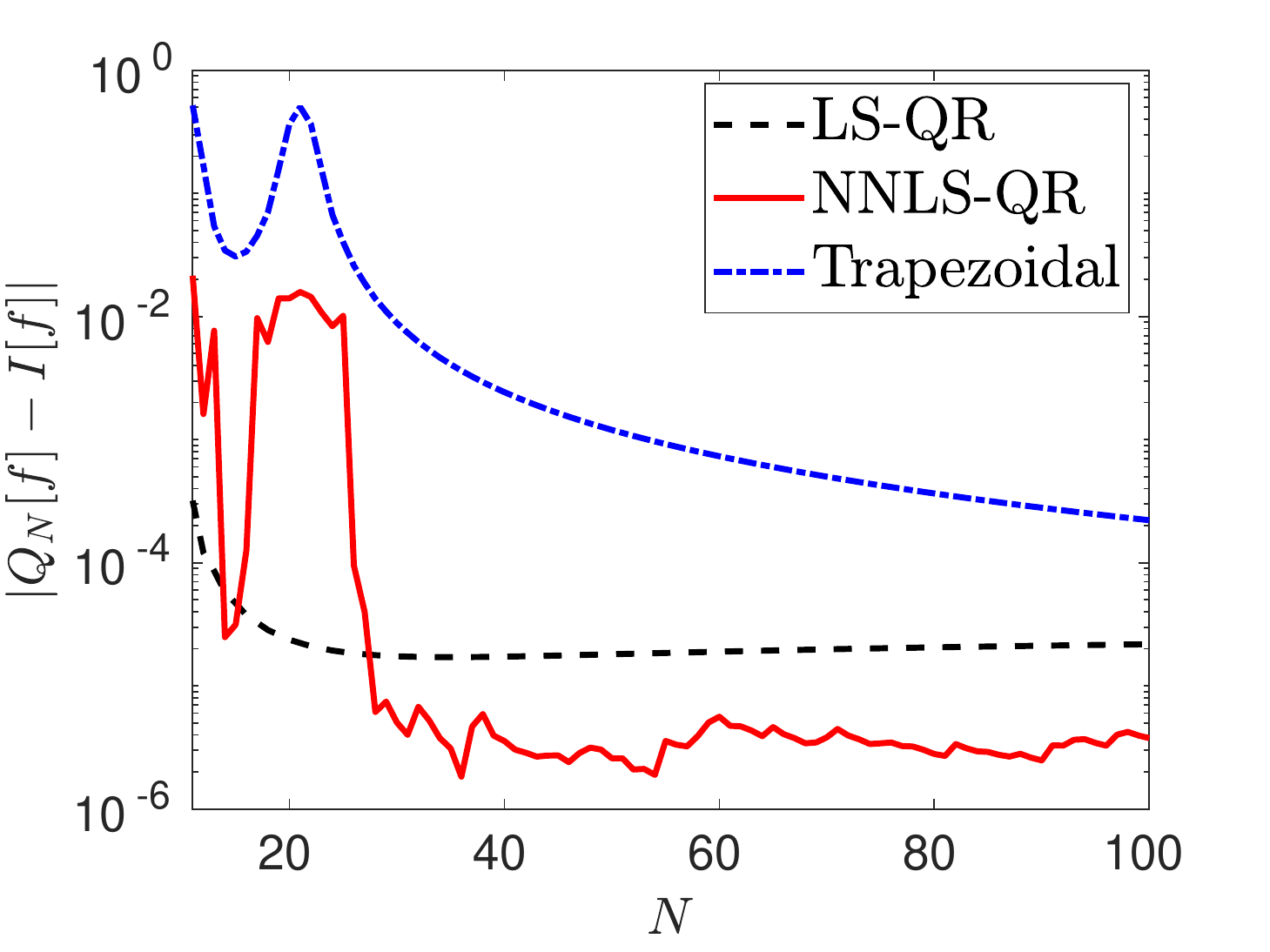}
    \caption{$f(x) = |x|^3$ \& $\omega(x) = \cos(20 \pi x)$}
    \label{fig:conv-N-eq-f1-o2-d=10}
  \end{subfigure}%
  \\
  \begin{subfigure}[b]{0.4\textwidth}
    \includegraphics[width=\textwidth]{%
      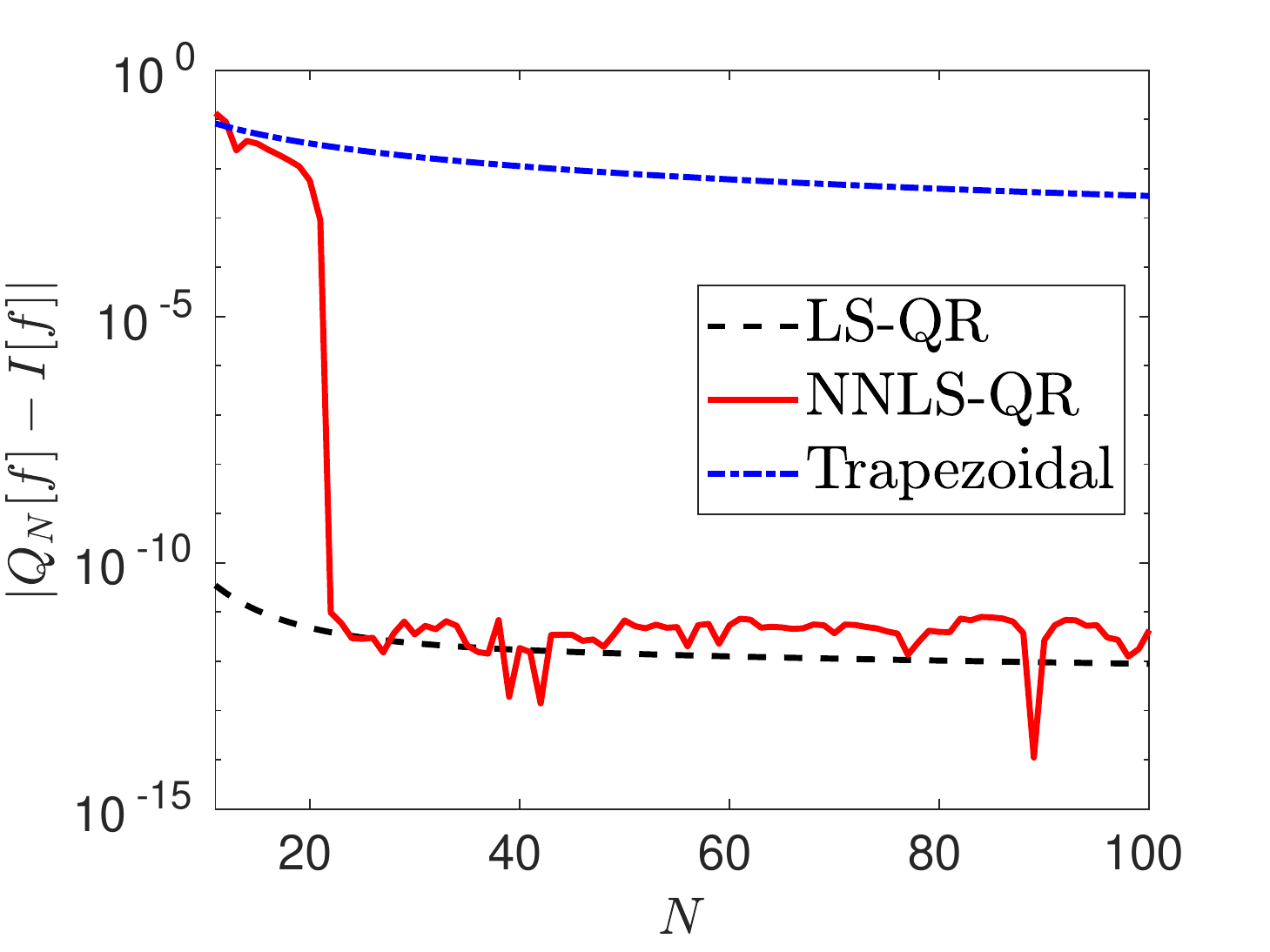}
    \caption{$f(x) = e^x$ \& $\omega(x) = x\sqrt{1-x^3}$}
    \label{fig:conv-N-eq-f2-o1-d=10}
  \end{subfigure}%
  ~
  \begin{subfigure}[b]{0.4\textwidth}
    \includegraphics[width=\textwidth]{%
      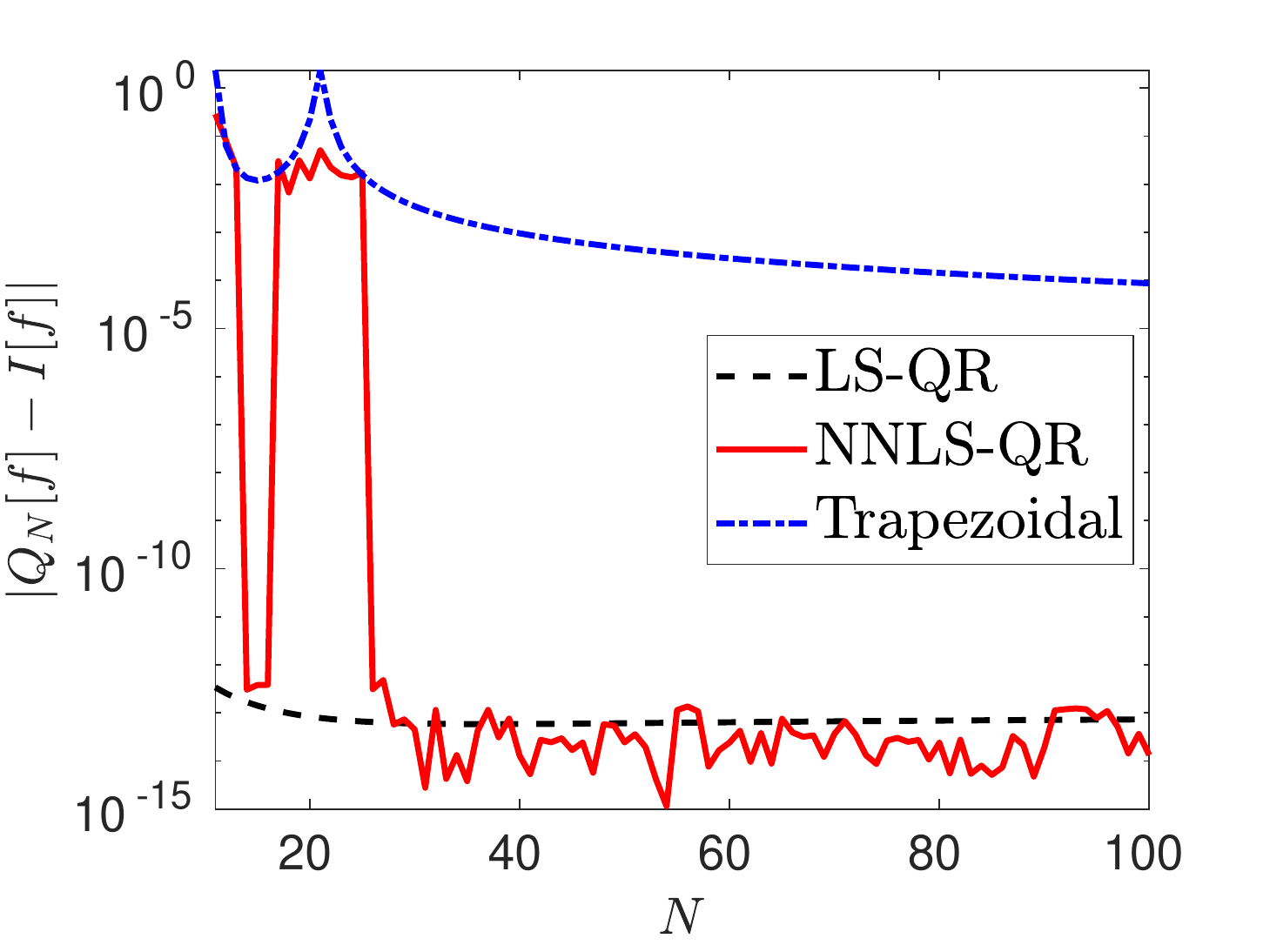}
    \caption{$f(x) = e^x$ \& $\omega(x) = \cos(20 \pi x)$}
    \label{fig:conv-N-eq-f2-o2-d=10}
  \end{subfigure}%
  \caption{Errors for the generalized composite trapezoidal rule, the LS-QR, and NNLS-QR with $d=10$ on equidistant 
points.}
  \label{fig:conv-N-eq}
\end{figure}

In Figure \ref{fig:conv-N-eq}, where equidistant quadrature points are investigated, the LS-QR and NNLS-QR are 
demonstrated to provide more accurate results than the generalized composite trapezoidal rule in nearly all cases. 
In fact, both QRs yield highly accurate results which are up to $10^{12}$ times more accurate than the ones 
obtained by the generalized composite trapezoidal rule at the same set of quadrature points. 
Yet, the NNLS-QR is observed to oscillate in its accuracy and to be less accurate for small 
numbers of quadrature points $N$. 
Note that for an insufficiently large number of quadrature points $N$, the exactness condition 
\eqref{eq:exactness-cond} is not satisfied by the NNLS-QR, and therefore not even constants might be handled accurately. 
For a sufficiently large number of quadrature points, however, the NNLS-QR provides similar accurate (sometimes even 
more accurate) results as the LS-QR. 

\begin{figure}[!htb]
  \centering
  \begin{subfigure}[b]{0.4\textwidth}
    \includegraphics[width=\textwidth]{%
      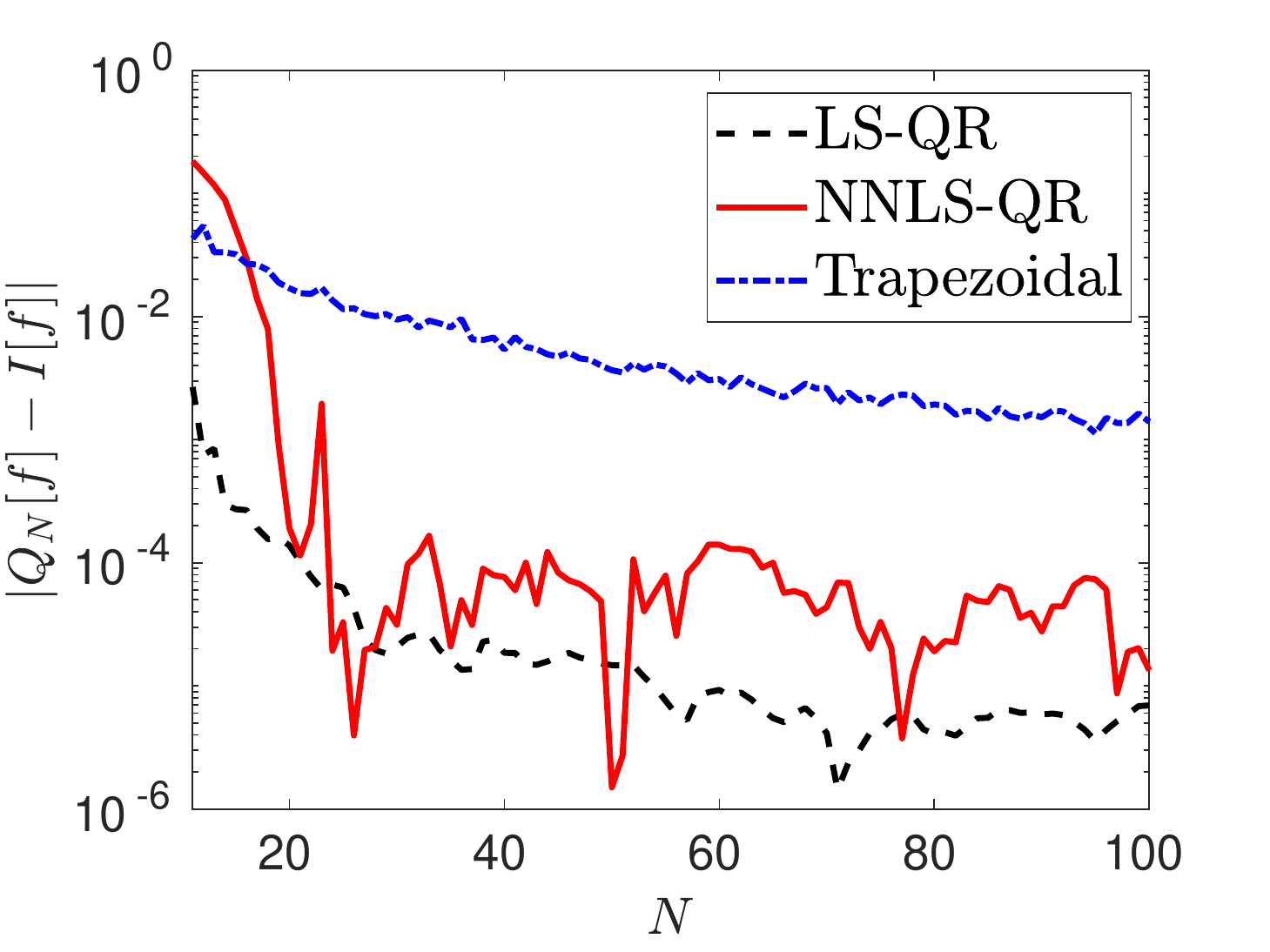}
    \caption{$f(x) = |x|^3$ \& $\omega(x) = x\sqrt{1-x^3}$}
    \label{fig:conv-N-sc-f1-o1-d=10}
  \end{subfigure}%
  ~
  \begin{subfigure}[b]{0.4\textwidth}
    \includegraphics[width=\textwidth]{%
      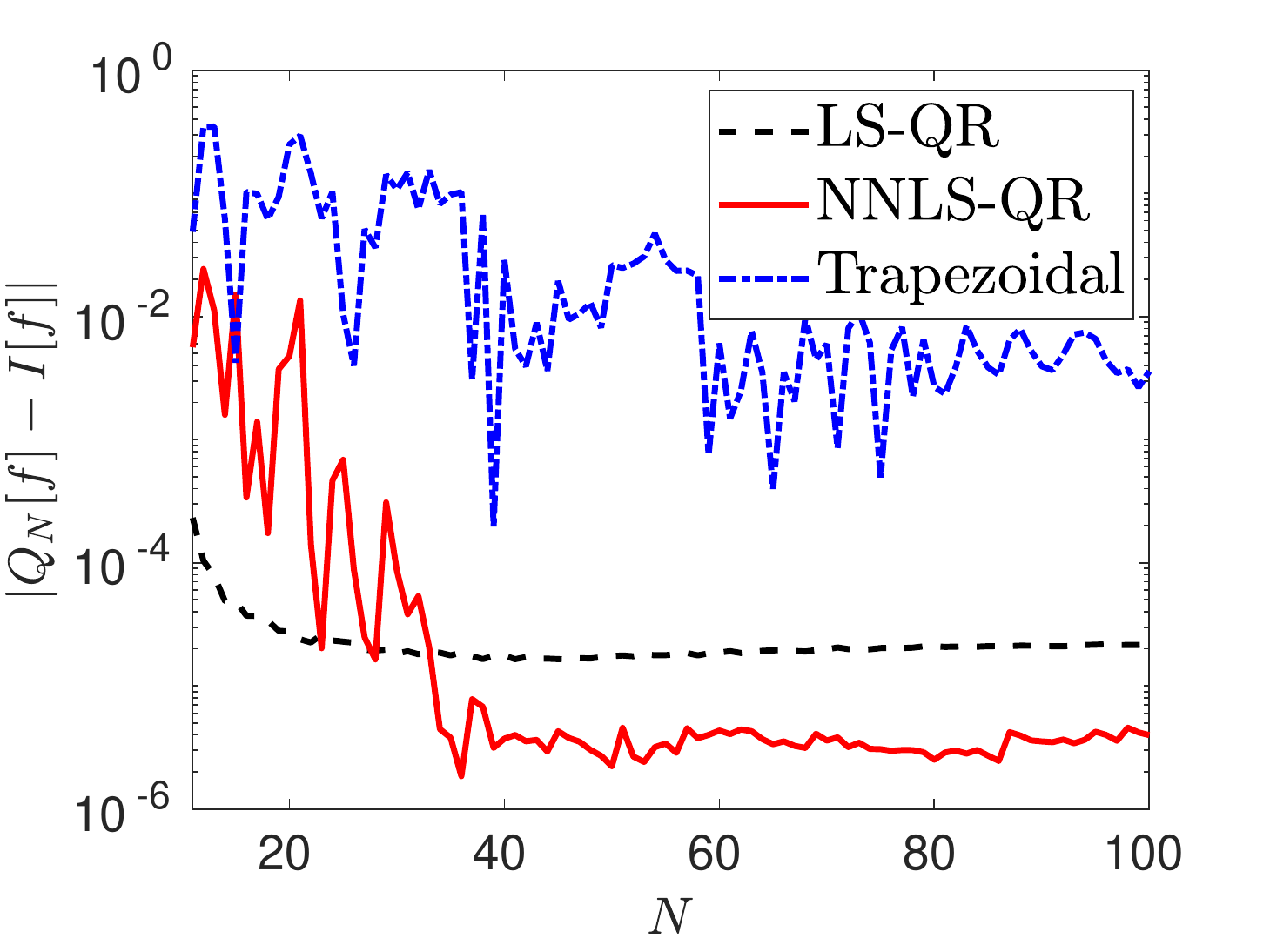}
    \caption{$f(x) = |x|^3$ \& $\omega(x) = \cos(20 \pi x)$}
    \label{fig:conv-N-sc-f1-o2-d=10}
  \end{subfigure}%
  \\
  \begin{subfigure}[b]{0.4\textwidth}
    \includegraphics[width=\textwidth]{%
      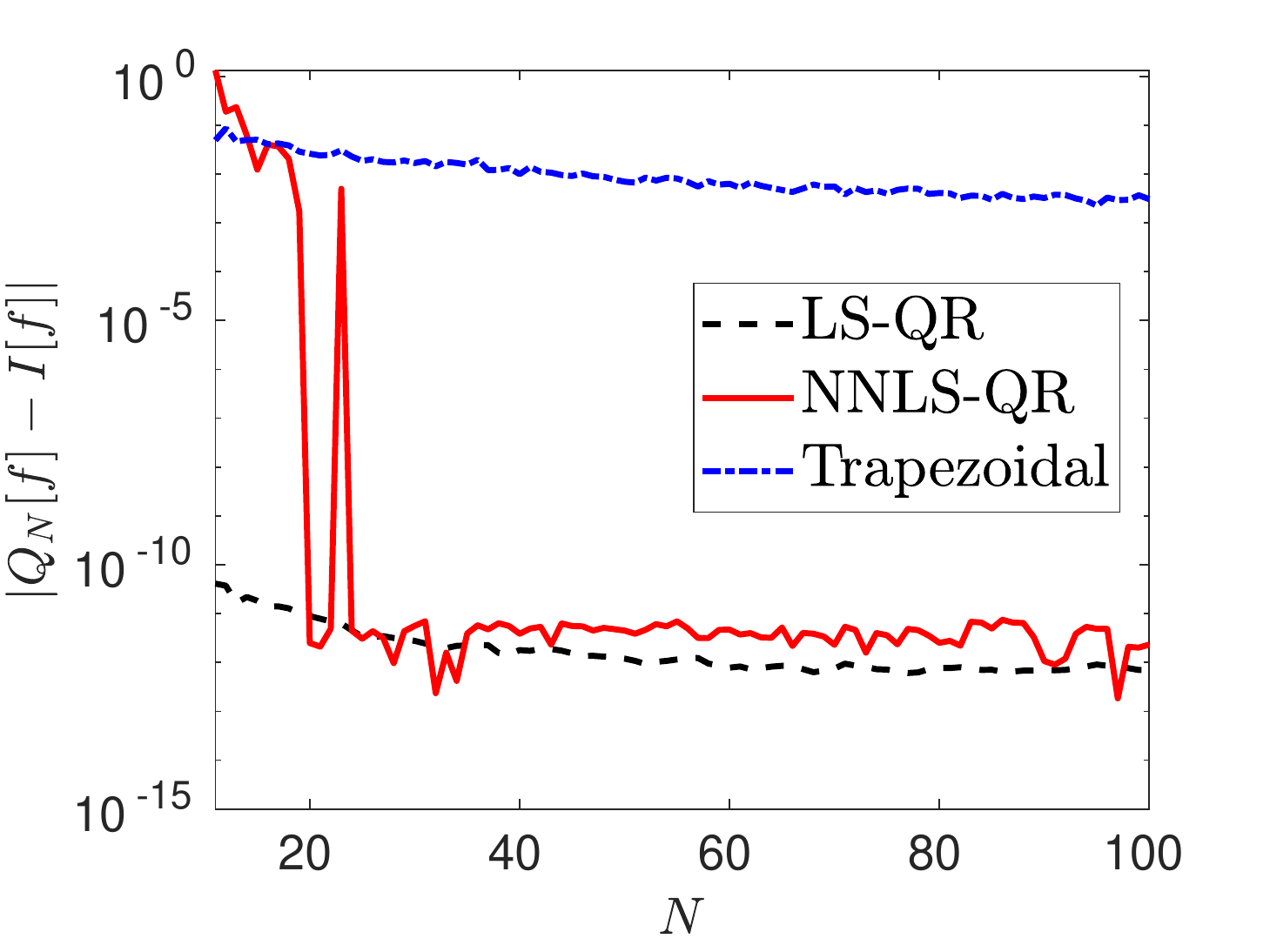}
    \caption{$f(x) = e^x$ \& $\omega(x) = x\sqrt{1-x^3}$}
    \label{fig:conv-N-sc-f2-o1-d=10}
  \end{subfigure}%
  ~
  \begin{subfigure}[b]{0.4\textwidth}
    \includegraphics[width=\textwidth]{%
      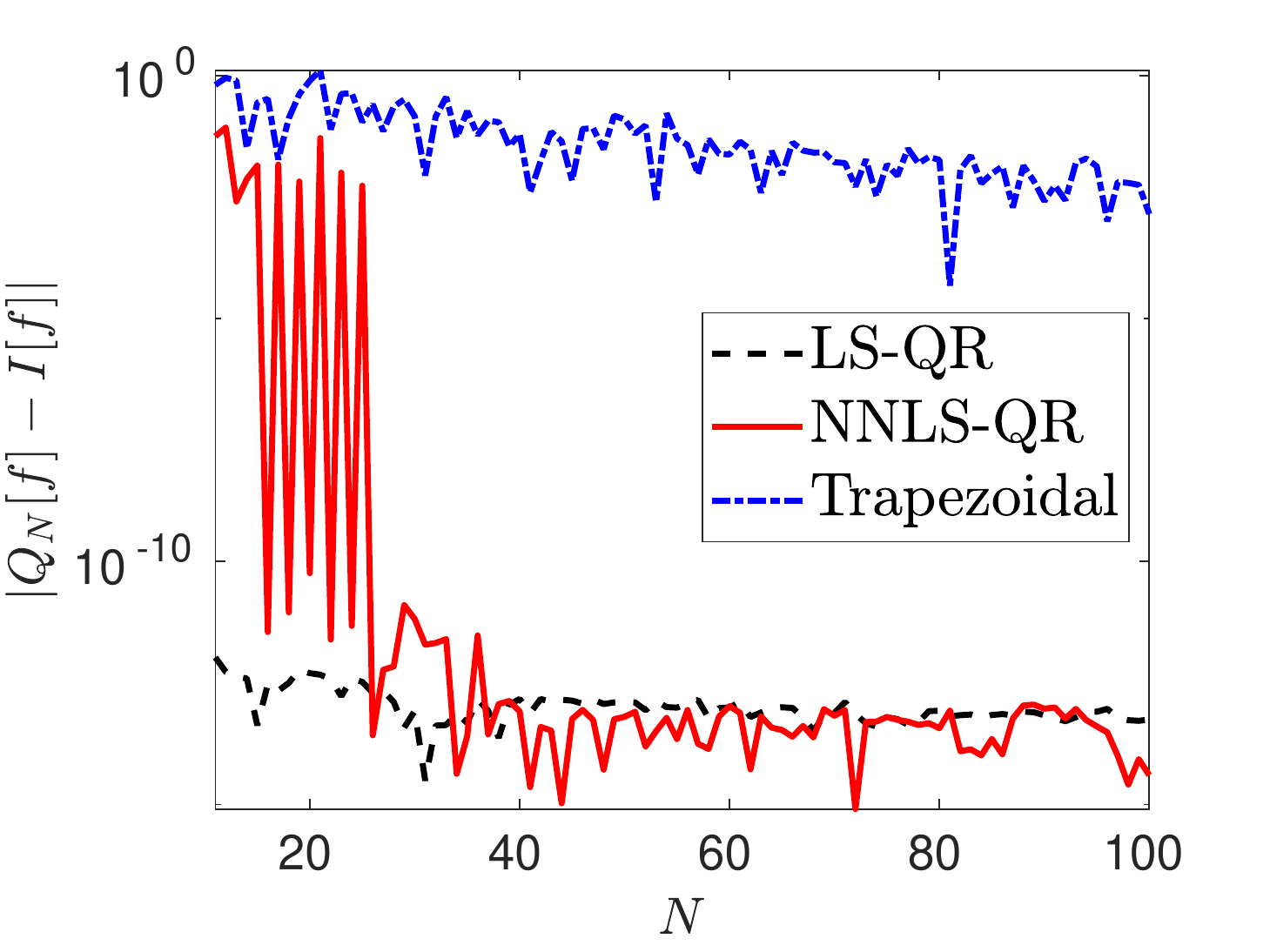}
    \caption{$f(x) = e^x$ \& $\omega(x) = \cos(20 \pi x)$}
    \label{fig:conv-N-sc-f2-o2-d=10}
  \end{subfigure}%
  \caption{Errors for the generalized composite trapezoidal rule, the LS-QR, 
and NNLS-QR with $d=10$ on scattered points.}
  \label{fig:conv-N-sc}
\end{figure}

Next, in Figure \ref{fig:conv-N-sc}, we consider scattered quadrature points. 
For scattered quadrature points the results and differences between the different QRs become less clear.  
Yet, the LS-QR and the NNLS-QR are observed to provide more accurate results than the generalized composite trapezoidal 
rule in most cases. 
Note that even on scattered quadrature points both QRs are up to $10^{12}$ times more accurate. 
Yet, the oscillations in the accuracy of the NNLS-QR become more pronounced on scattered quadrature points. 
Again, the NNLS-QR is demonstrated to provide highly accurate results only when a sufficiently large number of 
quadrature points $N$ is used.

\subsection{Accuracy for increasing $d$} 
\label{sub:conv-d} 

In the last section, we have considered accuracy for fixed degree of exactness $d$ and an increasing number of 
quadrature points $N$.  
In this section, we investigate accuracy for an increasing degree of exactness $d$ and adaptively choose 
${N = N(d) = \frac{1}{2}\left[ (2d-1)^2 + 1\right]}$. 
This choice is motivated by Theorem \ref{thm:stability-eq}. 
The most crucial step in the proof of Theorem \ref{thm:stability-eq} has been inequality \eqref{eq:crucial-inequality}, 
which holds for all ${k=0,\dots,d}$ if ${N \geq N(d) = \frac{1}{2}\left[ (2d-1)^2 + 1\right]}$ is chosen. 

\begin{figure}[!htb]
  \centering
  \begin{subfigure}[b]{0.4\textwidth}
    \includegraphics[width=\textwidth]{%
      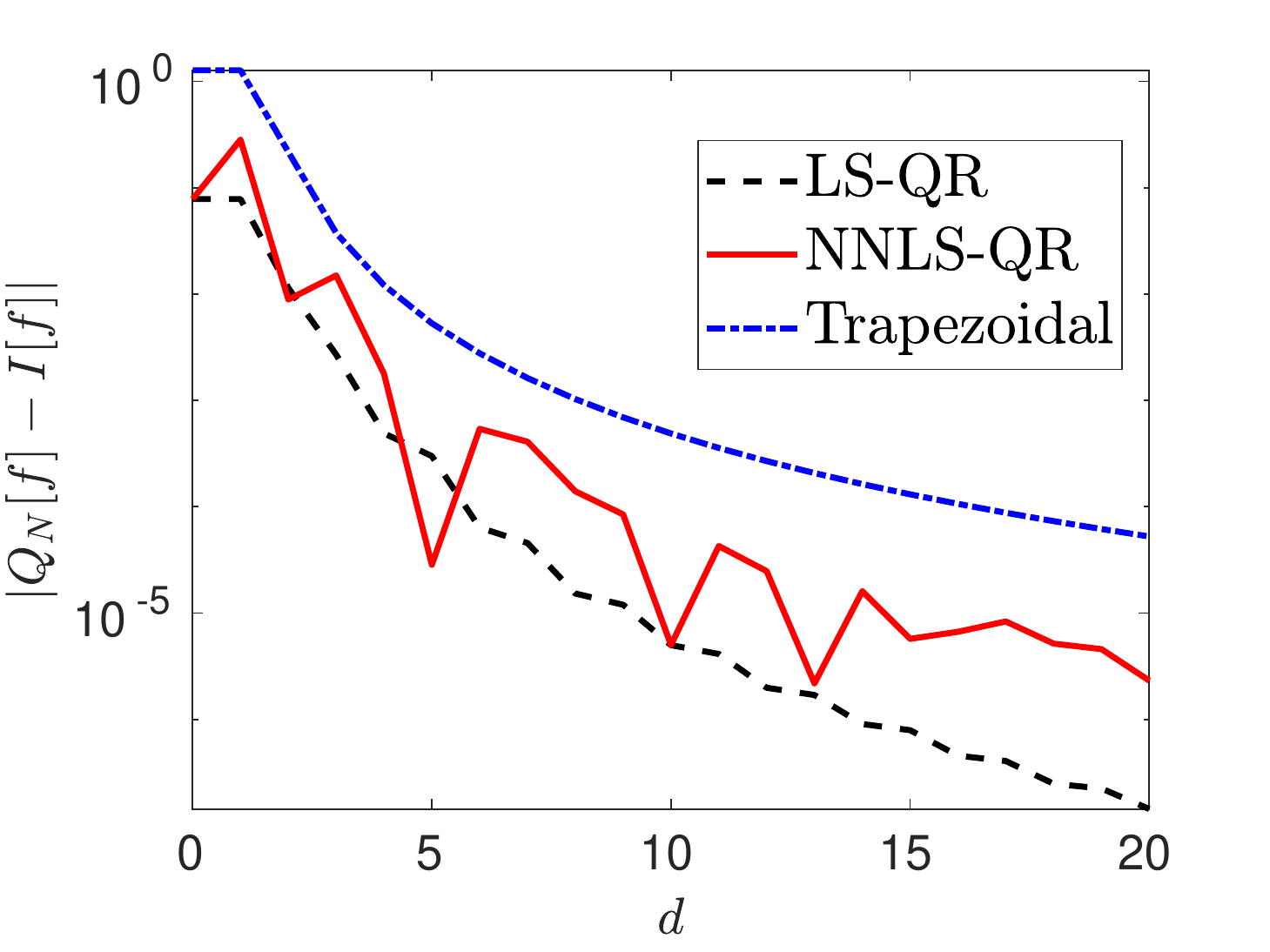}
    \caption{$f(x) = |x|^3 $ \& $\omega(x) = x\sqrt{1-x^3}$}
    \label{fig:accuracy-d-eq-o1-f1}
  \end{subfigure}%
  ~
  \begin{subfigure}[b]{0.4\textwidth}
    \includegraphics[width=\textwidth]{%
      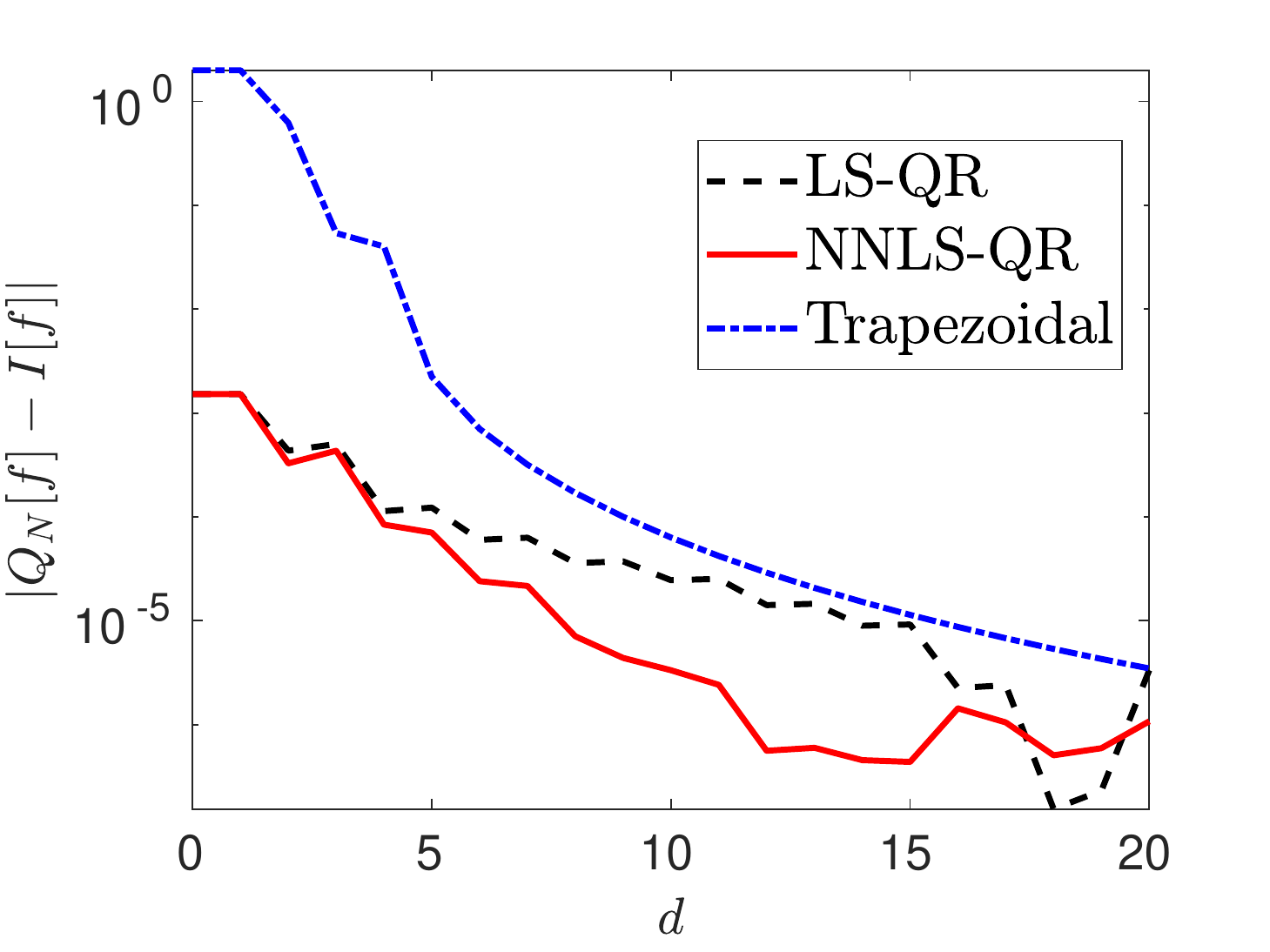}
    \caption{$f(x) = |x|^3 $ \& $\omega(x) = \cos(20 \pi x)$}
    \label{fig:accuracy-d-eq-o2-f1}
  \end{subfigure}%
  \\ 
  \begin{subfigure}[b]{0.4\textwidth}
    \includegraphics[width=\textwidth]{%
      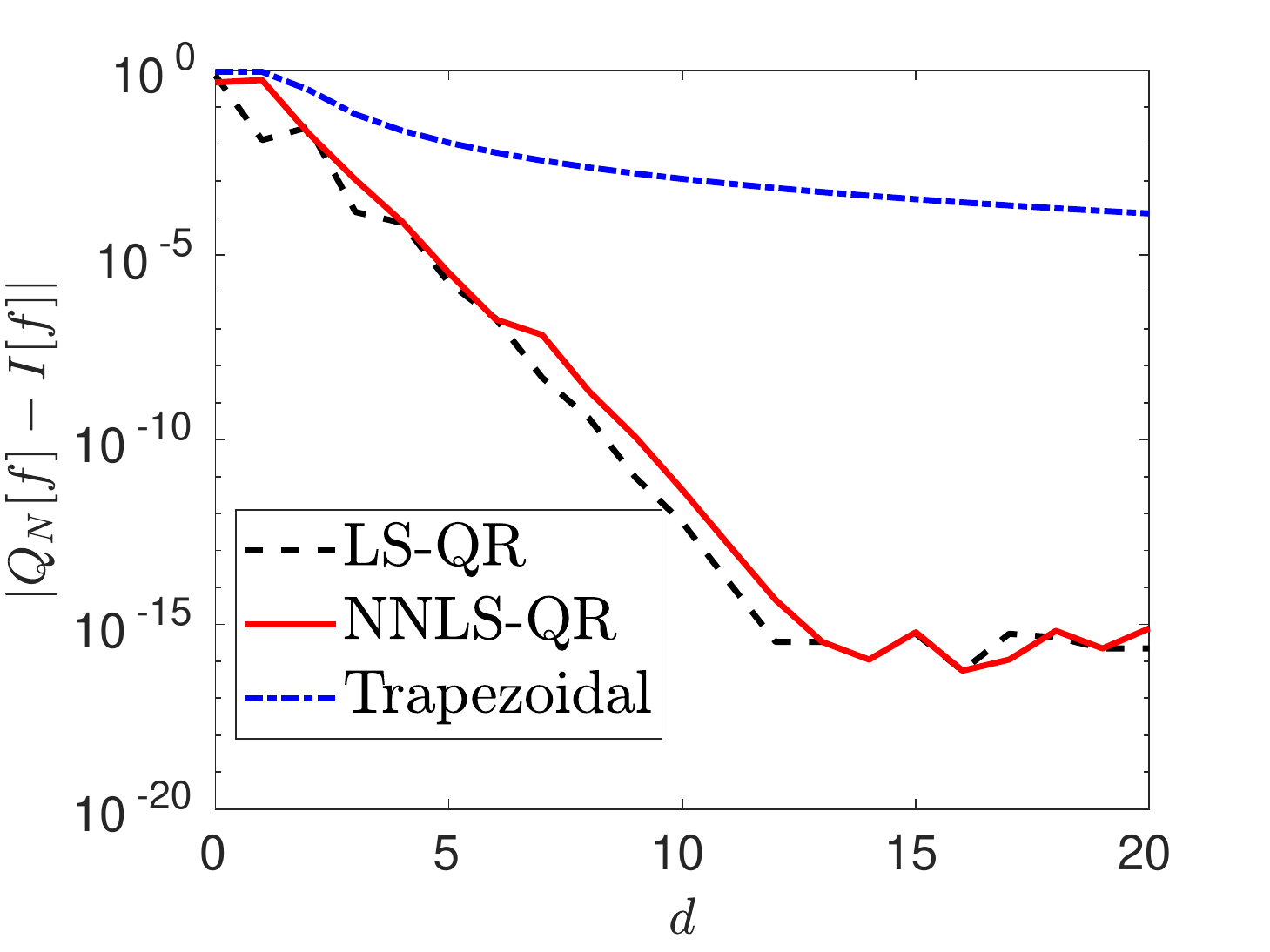}
    \caption{$f(x) = e^x $ \& $\omega(x) = x\sqrt{1-x^3}$}
    \label{fig:accuracy-d-eq-o1-f2}
  \end{subfigure}%
  ~
  \begin{subfigure}[b]{0.4\textwidth}
    \includegraphics[width=\textwidth]{%
      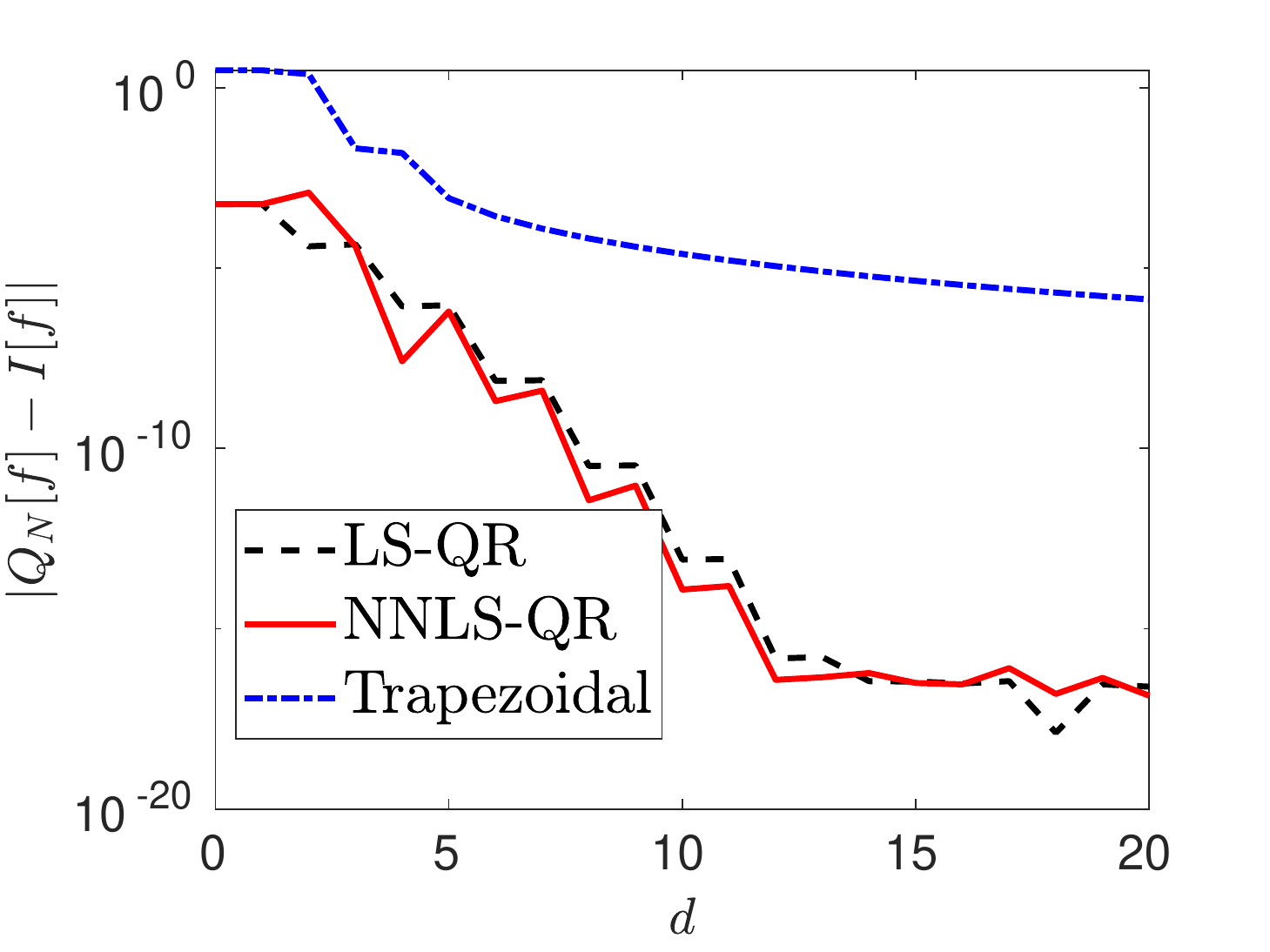}
    \caption{$f(x) = e^x $ \& $\omega(x) = \cos(20 \pi x)$}
    \label{fig:accuracy-d-eq-o2-f2}
  \end{subfigure}%
  \caption{Errors for the generalized composite trapezoidal rule, the LS-QR, 
and NNLS-QR on equidistant points.}
  \label{fig:accuracy-d-eq}
\end{figure}

Figure \ref{fig:accuracy-d-eq} illustrates the results of the LS-QR, the NNLS-QR, and the generalized composite 
trapezoidal rule for both test functions and weight functions as before on equidistant points. 
We observe that the LS-QR and the NNLS-QR provide more accurate results than the generalized composite 
trapezoidal rule in all cases. These are up to $10^{12}$ times more accurate.
Yet, in that respect, it should be noted that the product $f \omega$ is either not analytic or not periodic for 
the weight functions $\omega$ and test functions $f$ considered here. 
Otherwise, i.\,e., if $f \omega$ was analytic and periodic, the composite trapezoidal rule would converge 
geometrically; see \cite{trefethen2014exponentially}.

\begin{figure}[!htb]
  \centering
  \begin{subfigure}[b]{0.4\textwidth}
    \includegraphics[width=\textwidth]{%
      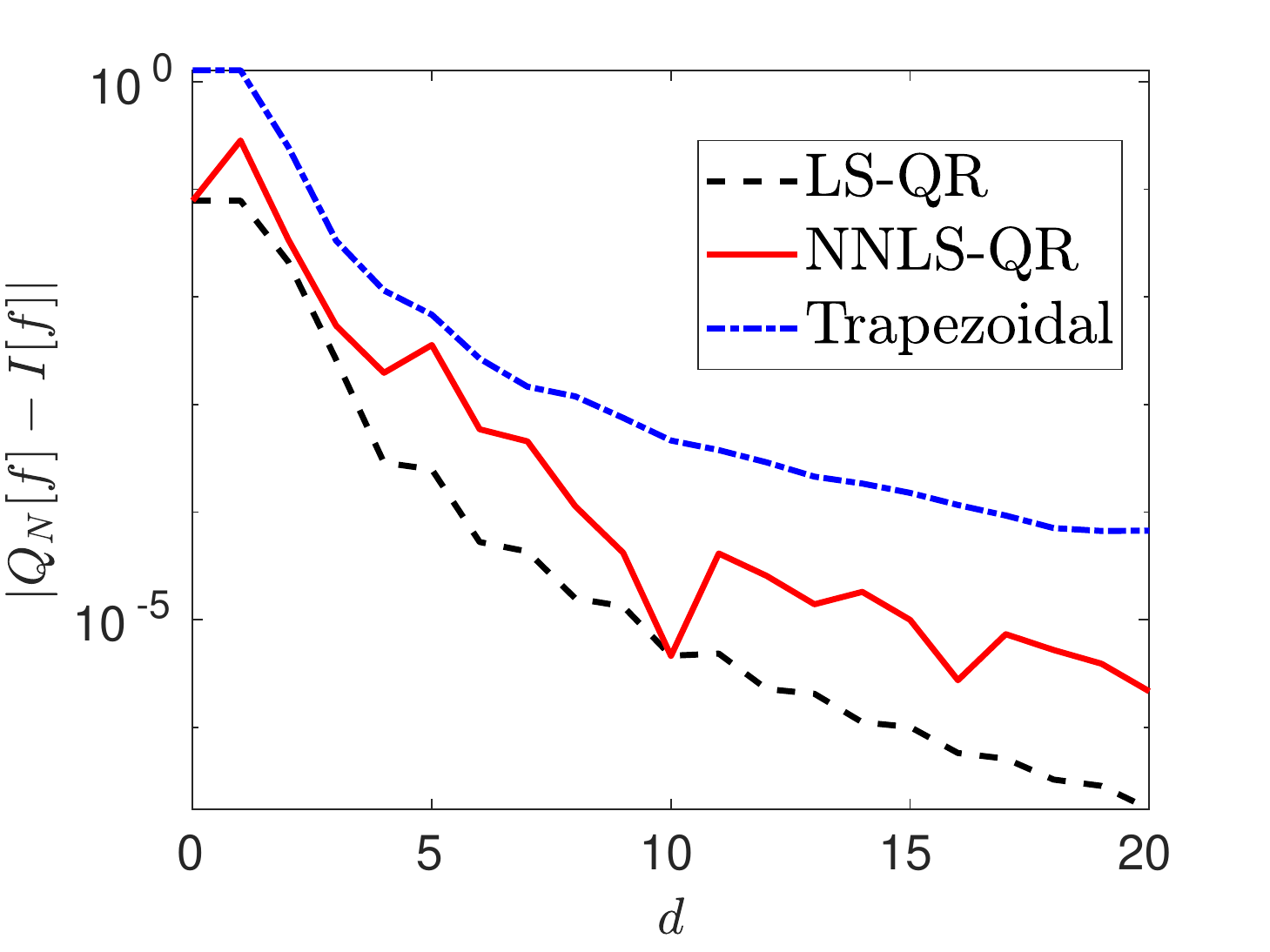}
    \caption{$f(x) = |x|^3 $ \& $\omega(x) = x\sqrt{1-x^3}$}
    \label{fig:accuracy-d-sc-o1-f1}
  \end{subfigure}%
  ~
  \begin{subfigure}[b]{0.4\textwidth}
    \includegraphics[width=\textwidth]{%
      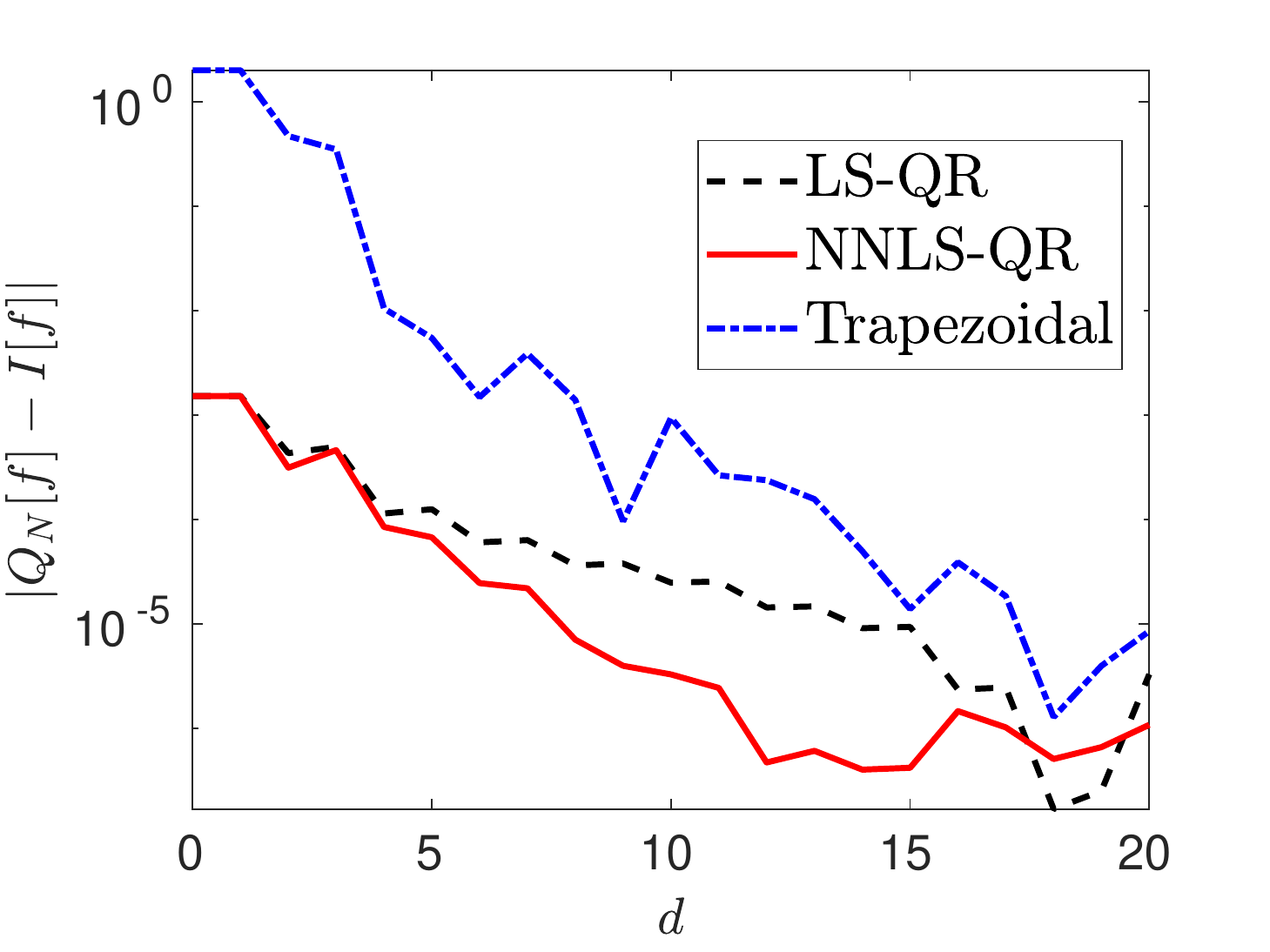}
    \caption{$f(x) = |x|^3 $ \& $\omega(x) = \cos(20 \pi x)$}
    \label{fig:accuracy-d-sc-o2-f1}
  \end{subfigure}%
  \\ 
  \begin{subfigure}[b]{0.4\textwidth}
    \includegraphics[width=\textwidth]{%
      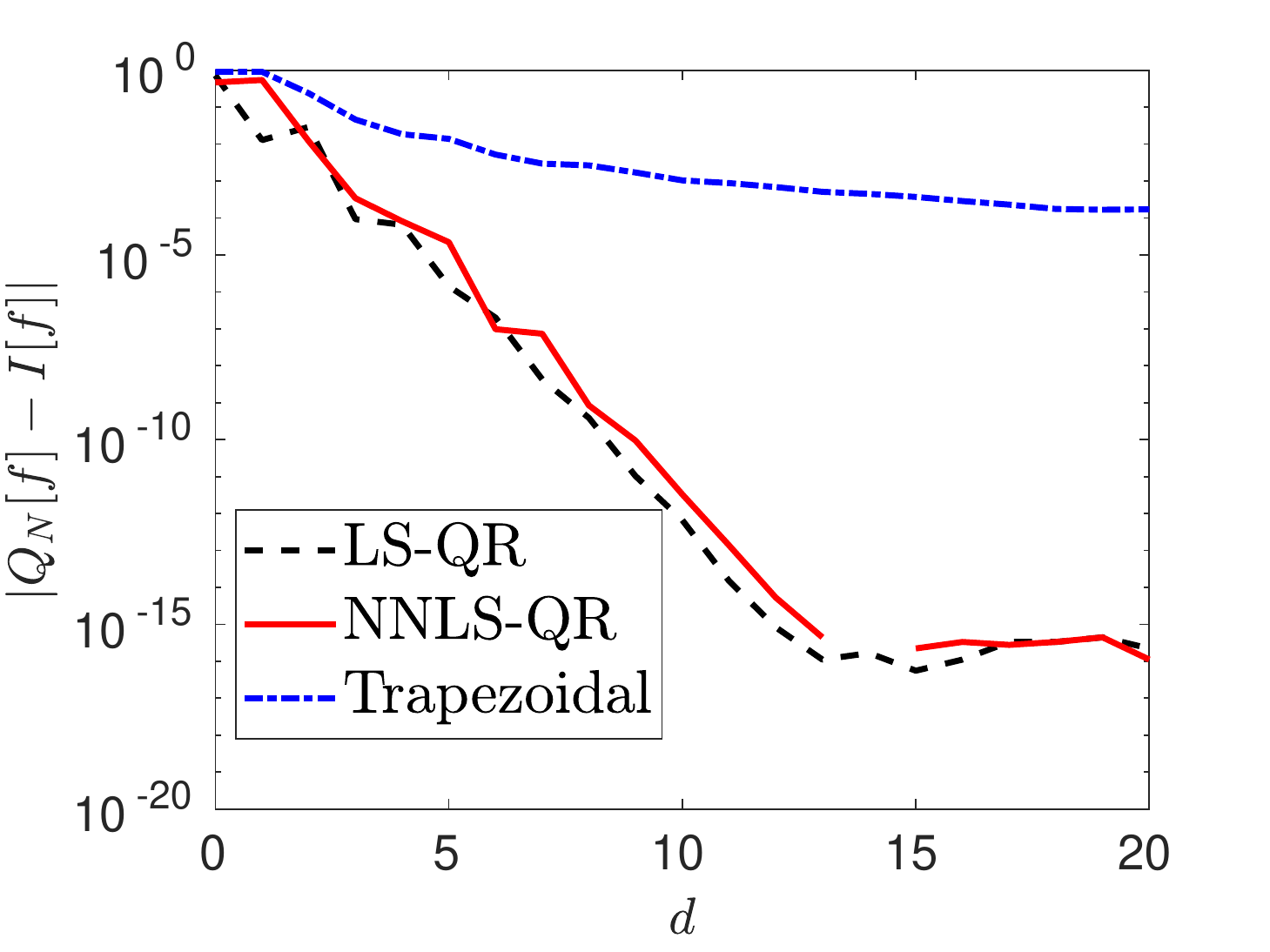}
    \caption{$f(x) = e^x $ \& $\omega(x) = x\sqrt{1-x^3}$}
    \label{fig:accuracy-d-sc-o1-f2}
  \end{subfigure}%
  ~
  \begin{subfigure}[b]{0.4\textwidth}
    \includegraphics[width=\textwidth]{%
      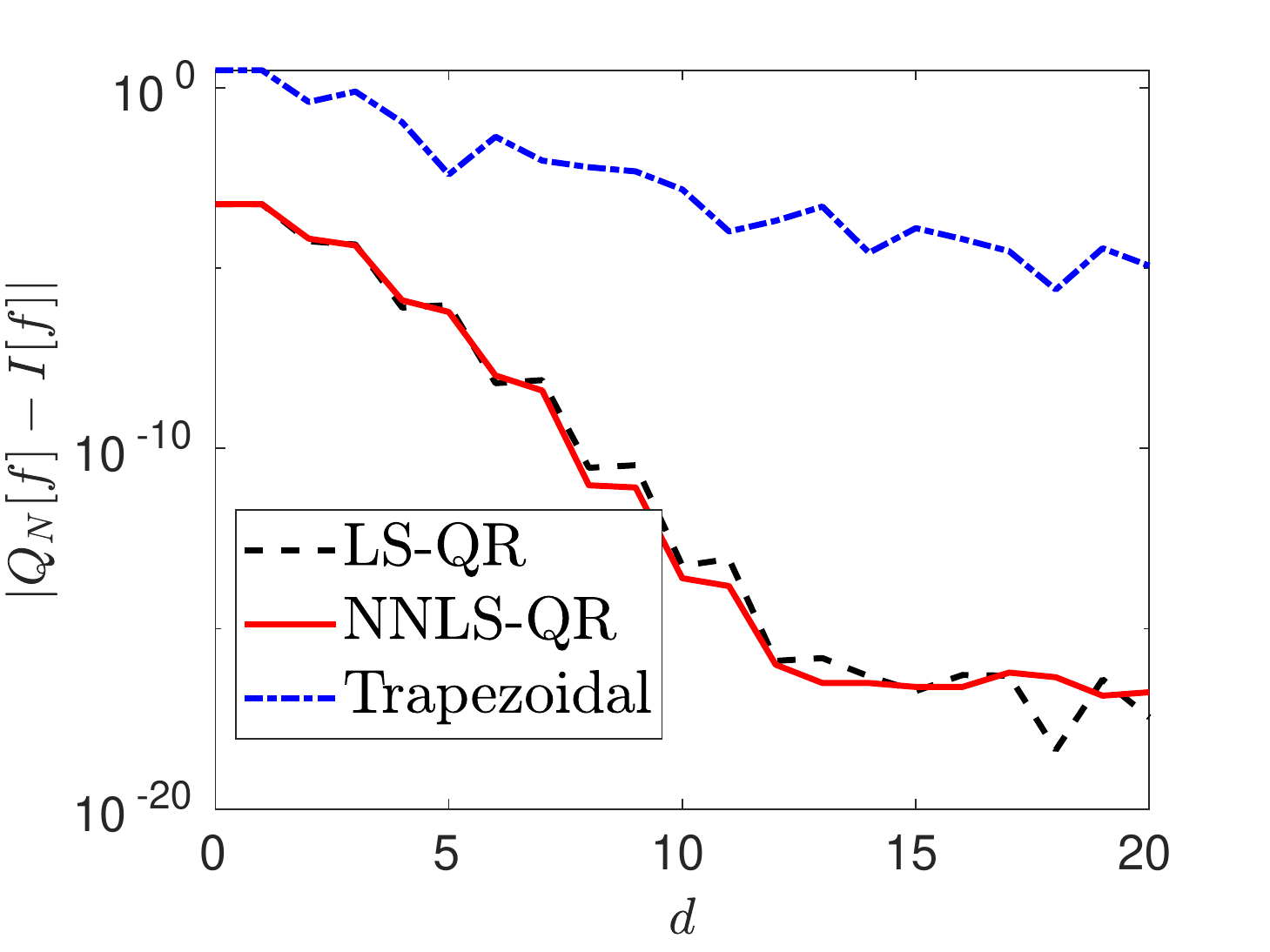}
    \caption{$f(x) = e^x $ \& $\omega(x) = \cos(20 \pi x)$}
    \label{fig:accuracy-d-sc-o2-f2}
  \end{subfigure}%
  \caption{Errors for the generalized composite trapezoidal rule, the LS-QR, 
and NNLS-QR on scattered points.}
  \label{fig:accuracy-d-sc}
\end{figure} 
 
Similar observations can be made for scattered quadrature points and are displayed in Figure \ref{fig:accuracy-d-sc}. 
Using the same set of equidistant or scattered quadrature points, the LS-QR and the NNLS-QR are able to provide highly 
accurate results even for general weight functions.

\subsection{Ratio between $d$ and $N$} 
\label{sub:ratio} 

Finally, we address the ratio between the degree of exactness $d$ and the number of equidistant quadrature points $N$ 
that is needed for stability (and exactness) for the LS-QRs (and NNLS-QRs). 
In \cite{wilson1970discrete}, Wilson showed that stability of LS-QRs for the weight function $\omega \equiv 1$ 
essentially is a $d^2$ process; that is, $N \approx C d^2$ equidistant quadrature points are needed for 
$\kappa(\vec{\omega}_N^{\mathrm{LS}}) = K_{\omega}$ to hold. 
In \cite{huybrechs2009stable} and \cite[Chapter 4]{glaubitz2020shock}, the same ratio between $d$ and $N$ has been observed for 
other positive weight functions, including $\omega(x) = 1-x^2$ and $\omega(x) = \sqrt{1-x^2}$. 
Here, we demonstrate that a similar ratio also holds for more general weight functions. 

\renewcommand{\arraystretch}{1.3}
\begin{table}[htb]
  \centering
  \begin{tabular}{c c c c c c}
    \toprule 
    $\omega(x)$ & $1$ & $1-x^2$ & $\sqrt{1-x^2}$ & $x \sqrt{1-x^3}$ & $\cos( 20 \pi x )$ \\ \hline 
    $s$ & 1.65 & 1.45 & 1.56 & 1.63 & 1.94 \\ 
    $C$ & 0.22 & 0.32 & 0.25 & 0.26 & 0.08 \\
    \bottomrule
    \end{tabular}
    \caption{LS fit of the parameters $C$ and $s$ in the model $N = C d^s$ for the LS-QR and different 
weight functions $\omega$.
    $N$ refers to the minimal number of equidistant quadrature points such that the quadrature weights of the LS-QR 
with degree of exactness $d$ satisfy $\kappa(\vec{\omega}_N^{\mathrm{LS}}) \leq 2 K_{\omega}$.
    }
    \label{tab:ratio_LS-QR}
\end{table}

\renewcommand{\arraystretch}{1.3}
\begin{table}[htb]
  \centering
  \begin{tabular}{c c c c c c}
    \toprule 
    $\omega(x)$ & $1$ & $1-x^2$ & $\sqrt{1-x^2}$ & $x \sqrt{1-x^3}$ & $\cos( 20 \pi x )$ \\ \hline 
    $s$ & 1.76 & 1.66 & 1.70 & 1.66 & 1.68 \\ 
    $C$ & 0.19 & 0.30 & 0.35 & 0.41 & 0.27 \\
    \bottomrule
    \end{tabular}
    \caption{LS fit of the parameters $C$ and $s$ in the model $N = C d^s$ for the NNLS-QR and different 
weight functions $\omega$.
    $N$ refers to the minimal number of equidistant quadrature points such that the quadrature weights of the NNLS-QR 
with approximate degree of exactness $d$ satisfy $\kappa(\vec{\omega}_N^{\mathrm{NNLS}}) \leq 2 K_{\omega}$ as well as 
$|| A \vec{\omega}_N^{\mathrm{NNLS}} - \vec{m} ||_2 \leq 10^{-14}$.
    }
    \label{tab:ratio_NNLS-QR}
\end{table}

Table \ref{tab:ratio_LS-QR} lists the results of a LS fit of the parameters $C$ and $s$ in the model $N = C d^s$ for the LS-QR and different weight functions $\omega$. 
Here, $N$ refers to the minimal number of equidistant quadrature points such that the quadrature weights of the LS-QR 
with degree of exactness $d$ satisfy $\kappa(\vec{\omega}_N^{\mathrm{LS}}) \leq 2 K_{\omega}$, where $d$ ranges from $0$ 
to $40$. 
Note that for general weight functions we have only been able to prove that $\kappa(\vec{\omega}_N^{\mathrm{LS}})$ is 
uniformly bounded w.\,r.\,t.\ $N$. 
Yet, in contrast to positive weight functions \cite[Chapter 4]{glaubitz2020shock}, it is not ensured that 
$\kappa(\vec{\omega}_N^{\mathrm{LS}}) \leq K_{\omega}$ holds for a sufficiently large $N$. 
In all numerical tests, we observed $\kappa(\vec{\omega}_N^{\mathrm{LS}}) \leq 2 K_{\omega}$ to hold for a sufficiently 
large $N$, however. 
In particular, this yields stability of the LS-QR. 
As a result of using $\kappa(\vec{\omega}_N^{\mathrm{LS}}) \leq 2 K_{\omega}$ instead of 
$\kappa(\vec{\omega}_N^{\mathrm{LS}}) \leq K_{\omega}$ as 
a criterion to determine $N$, we observe slightly smaller exponent parameters $s < 2$ in Table \ref{tab:ratio_LS-QR}. 
Yet---and more important---we can note from Table \ref{tab:ratio_LS-QR} that the ratios between $d$ and $N$ are 
similar for positive and general weight functions. 
Similar ratios also hold for the NNLS-QR and are reported in Table \ref{tab:ratio_NNLS-QR}. 
Here, $N$ refers to the minimal number of equidistant quadrature points such that the NNLS-QR with approximate degree of 
exactness $d$ (see \S \ref{sec:NNLS-QRs}) is stable as well as exact. 
Again, stability is checked by the condition $\kappa(\vec{\omega}_N^{\mathrm{NNLS}}) \leq 2 K_{\omega}$. 
Exactness, on the other hand, is checked by the condition 
$|| A \vec{\omega}_N^{\mathrm{NNLS}} - \vec{m} ||_2 \leq 10^{-14}$. 
We use the bound $10^{-14}$ instead of $0$ to encounter possible round-off errors in our implementation; 
see \S \ref{sub:implementation}.  
\section{Summary} 
\label{sec:summary} 

In this work, we have investigated stability of quadrature rules for integrals with general weight functions, possibly 
having mixed signs. 
Such weight functions, for instance, arise in the weak form of the Schr\"odinger equation and for (highly) oscillatory 
integrals. 
In contrast to non-negative weight functions, stability of quadrature rules for general weight functions is not 
guaranteed by non-negative-only quadrature weights anymore. 
Therefore, we have proposed to treat stable quadrature rules (for which round-off errors due to inexact 
arithmetics are uniformly bounded with respect to the number of quadrature points $N$) and sign-consistent quadrature 
rules (for which the signs of the quadrature weights match the signs of the weight function at the corresponding 
quadrature points) as two separated classes. 

Moreover, we have proposed two different procedures to construct such quadrature rules for general weight functions. 
In particular, our procedures allow us to construct stable high-order quadrature rules on equidistant and even 
scattered quadrature points. 
This is especially beneficial since in many applications it can be impractical, if not even impossible, to obtain 
data to fit known quadrature rules. 
Numerical tests demonstrate that both quadrature rules, referred to as \emph{least squares (LS-)} and 
\emph{non-negative least squares quadrature rules (NNLS-QR)}, are able to provide highly accurate results. 

Future work will focus on the extension of the proposed quadrature rules to higher dimensions. 
\section*{Appendix: Discrete Chebyshev polynomials}
\label{sec:appendix}

The discrete Chebyshev polynomials arise as a special case of the Hahn polynomials 
\cite{hahn1949orthogonalpolynome,szeg1939orthogonal}. 
Also see \cite[Chapter 18]{dlmf2020digital}.
We collect some of their properties which come in useful in \S \ref{sub:stability}. 
For $\alpha,\beta > -1$ and $k=0,\dots,N-1$, the \textit{Hahn polynomials} may be defined in terms of a  generalized 
hypergeometric series as 
\begin{equation}
\begin{aligned}
  Q_{k}(x;\alpha,\beta ,N-1) 
    & = {}_{3}F_{2}(-k,-x,k+\alpha +\beta +1;\alpha +1,-N+1;1) \\ 
    & = \sum_{j=0}^k \frac{ (-k)_j (k+\alpha+\beta+1)_j (-x)_j }{ (\alpha+1)_j (-N+1)_j} \frac{1}{j!} 
\end{aligned}
\end{equation}
on $[0,N-1]$, where we have used the \emph{Pochhammer symbol} 
\begin{equation}
  (a)_0 = 1, \quad 
  (a)_j = a(a+1) \dots (a+j-1).
\end{equation}
The Hahn polynomials are orthogonal on $[0,N-1]$ w.\,r.\,t.\ the inner product 
\begin{equation}
  \scp{f}{g}_{\rho} := \sum_{n=0}^{N-1} f(n) g(n) \rho(n)  
\end{equation}
with weight function 
\begin{equation}
  \rho(x) := \binom{x+\alpha}{x} \binom{N-1-x+\beta}{N-1-x}.
\end{equation}
They are normalized by 
\begin{equation}
  \norm{Q_k(\cdot,\alpha,\beta,N-1)}_{\rho}^2 
    = h_k 
    := \frac{(-1)^k (k+\alpha+\beta+1)_{N} (\beta+1)_k k!}{(2k+\alpha+\beta+1) (\alpha+1)_k (-N+1)_k (N-1)!}. 
\end{equation}
Further, Dette \cite{dette1995new} proved that for $\alpha+\beta > -1$ and 
\begin{equation}
  k \leq k(\alpha,\beta,N-1) 
    := -\frac{1}{2} \left( \alpha + \beta - 1 - \sqrt{(\alpha+\beta+1)(\alpha+\beta+2N-1)} \right)
\end{equation}
the Hahn polynomials are bounded by 
\begin{equation}
  \max_{x \in [0,N-1]} \left| Q_k(x,\alpha,\beta,N-1) \right| 
    \leq \max \left\{ 1, \frac{(\beta+1)_k}{(\alpha+1)_k} \right\}.
\end{equation}
Here, we choose $\alpha=\beta=0$, which results in the \textit{discrete Chebyshev polynomials} on $[0,N-1]$, and normalize 
and transform them to the interval $[a,b]$, resulting in the polynomials
\begin{equation}\label{eq:DOPs-eq}
  \varphi_k:[a,b] \to \R, \quad 
  \varphi_k(x) = \frac{1}{\sqrt{h_k}} Q_k\left( \frac{N-1}{b-a}(x-a), 0, 0 , N-1 \right). 
\end{equation}
These polynomials form a basis of DOPs w.\,r.\,t.\ the original inner product 
$\scp{\cdot}{\cdot}_{\vec{x}_N}$ in \eqref{eq:inner-prod} when the points $\vec{x}_N$ are equidistant, i.\,e., 
\begin{equation}
  x_n = b - a \frac{n-1}{N-1}, \quad n=1,\dots,N. 
\end{equation}
Further, for 
\begin{equation}\label{eq:cond-k-N}
  k \leq k(N) := \frac{1}{2} \left( 1 + \sqrt{2N-1} \right)
\end{equation}
they are bounded by 
\begin{equation}\label{eq:bound-inf}
  \max_{x \in [a,b]} \left| \varphi_k(x) \right| 
    \leq \frac{1}{\sqrt{h_k}}. 
\end{equation}
Finally, we note that 
\begin{align}
  (k+1)_{N} 
    & = \frac{(N+k)!}{k!}, \\ 
  (-N+1)_k 
    & = (-1)^k \frac{(N-1)!}{(N-k-1)!}.
\end{align}
Thus, we have 
\begin{equation}\label{eq:h_k}
  h_k 
    = \frac{(-1)^k (k+1)_{N} k!}{(2k+1)(-N+1)_k (N-1)!} 
    = \frac{(N+k)! (N-k-1)!}{(2k+1) (N-1)! (N-1)!}
\end{equation}
for $\alpha=\beta=0$.

\section*{Acknowledgements}
The author would like to thank the Max Planck Institute for Mathematics (MPIM) Bonn for wonderful working conditions.
Furthermore, this work is supported by the German Research Foundation (DFG, Deutsche Forschungsgemeinschaft) under Grant SO 363/15-1.

\bibliographystyle{abbrv}
\bibliography{literature}

\end{document}